\documentclass[12pt, reqno]{amsart}
\usepackage{amsmath, amsthm, amscd, amsfonts, amssymb, graphicx, color}
\usepackage[bookmarksnumbered, colorlinks, plainpages]{hyperref}
\hypersetup{colorlinks=true,linkcolor=red, anchorcolor=green, citecolor=cyan, urlcolor=red, filecolor=magenta, pdftoolbar=true}

\textheight 22.5truecm \textwidth 14.5truecm
\setlength{\oddsidemargin}{0.35in}\setlength{\evensidemargin}{0.35in}

\setlength{\topmargin}{-.5cm}

\newtheorem{theorem}{Theorem}[section]
\newtheorem{lemma}[theorem]{Lemma}
\newtheorem{proposition}[theorem]{Proposition}
\newtheorem{corollary}[theorem]{Corollary}
\theoremstyle{definition}

\theoremstyle{remark}
\newtheorem{remark}[theorem]{Remark}
\numberwithin{equation}{section}

\newcommand{\be}{\begin{equation}}
\newcommand{\ee}{\end{equation}}

\newcommand{\cM}{{\mathcal M}}
\newcommand{\NN}{\mathbb{N}}

\begin{document}
\setcounter{page}{1}

\title[Monotonicity properties for joint and generalized radii]{Monotonicity properties for joint and generalized spectral radius and their essential versions of weighted geometric symmetrizations}

\author{ Katarina Bogdanovi\'{c}$^{1}$, Aljo\v{s}a Peperko$^{2,3*}$}
\date{\thanks{
Faculty of Mathematics$^1$,
University of Belgrade,
Studentski trg 16,
SRB-11000 Belgrade, Serbia,
email:   katarinabgd77@gmail.com \\
  \\
Faculty of Mechanical Engineering$^2$,
University of Ljubljana,
A\v{s}ker\v{c}eva 6,
SI-1000 Ljubljana, Slovenia,\\
Institute of Mathematics, Physics and Mechanics$^3$,
Jadranska 19,
SI-1000 Ljubljana, Slovenia \\
e-mail:   aljosa.peperko@fs.uni-lj.si\\
*Corresponding author
} \today}


\subjclass[2020]{47A10, 47B65, 47B34, 15A42, 15A60, 15B48}

\keywords{weighted Hadamard-Schur geometric mean; Hadamard-Schur product;  weighted geometric symmetrizations; bounded sets of operators; positive kernel operators; joint spectral radius; generalized spectral radius; essential joint and essential generalized spectral radius; operator norm; Hausdorff measure of non-compactness; numerical radius}  


\begin{abstract}
We prove new monotonicity properties for joint and generalized spectral radius and their essential versions of weighted geometric symmetrizations of bounded sets
of positive kernel operators on $L^2$. To our knowledge, several proved properties are new even in the finite dimensional case. 
\end{abstract} \maketitle

\section{Introduction}

Let $A=[a_{ij}]$ be an entrywise nonnegative $n\times n$ matrix and let $S(A)=[\sqrt{a_{ij}a_{ji}}]$ be its geometric symmetrization. In \cite{Schwenk86}, Schwenk proved the inequality  
\be
r (S(A)) \le r(A),
\label{prva}
\ee 
for the spectral radius $r(\cdot)$ by using graph-theoretical methods. In \cite{EJS88},  Elsner, Johnson and Dias Da Silva proved that the inequality 
\be
 r(A_1 ^{( \alpha _1)} \circ A_2 ^{(\alpha _2)} \circ \cdots \circ A_m ^{(\alpha _m)} ) \le
r(A_1)^{ \alpha _1} \, r(A_2)^{\alpha _2} \cdots r(A_m)^{\alpha _m} 
\label{gl1vecr_matrix}
\ee
for Hadamard weighted geometric mean holds for  nonnegative $n\times n$ matrices $A_1, A_2, \ldots , A_m$ and nonnegative numbers $\alpha _1$, $\alpha _2$,..., $\alpha _m$ such that $\sum_{j=1}^{m} \alpha _j \ge 1$. Here $A^{( \alpha)}=[a_{ij} ^{\alpha}]$ denotes the Hadamard (Schur) power of $A$ and $A\circ B=[a_{ij}b_{ij}]$ denotes the  Hadamard (Schur) product of matrices $A$ and $B$. Clearly, (\ref{gl1vecr_matrix}) generalizes (\ref{prva}), since  $S(A)= A^{( \frac{1}{2})} \circ (A^T) ^{(\frac{1}{2})}$. Let us point out that inequality (\ref{gl1vecr_matrix}) can straightforwardly be deduced from an earlier result by Kingman \cite{K61} and that (\ref{prva}) is a special case of earlier results by Karlin and Ost \cite[Theorem 2.1 and Remark 1]{KO85} and that the case $\sum_{j=1}^{m} \alpha _j = 1$ of (\ref{gl1vecr_matrix}) was already obtained in \cite[Remark 1]{KO85} (in \cite{KO85} these results were applied in the context of finite stationary Markov chains). Since then inequalities and equalities on  Hadamard weighted geometric means and weighted geometric symmetrizations received a lot of attention and have been applied in a variety of contexts (see e.g. \cite{EHP90, D92, DP05, P06, Sh07, Au10, S11, Sc11, P12,  Drn, P19, P21, BP21, BP22b, B23+, LP24, BP24a} and also a more detailed list of references in \cite{BP24a}. 

In \cite{D92}, Drnov\v{s}ek proved that in the case when $\sum_{j=1}^{m} \alpha _j = 1$, inequality (\ref{gl1vecr_matrix}) holds also for positive compact operators on Banach function spaces. In \cite{DP05}, Drnov\v{s}ek and the second author of the current article established that the compactness assumption can be removed and that analogous results hold also for operator norm and also for numerical radius on $L^2$. Further they proved additional results for products of Hadamard weighted geometric means (see Theorem  \ref{thbegin} below).   In \cite{P06}, the second author also showed that analogous results also hold for Hausdorff measure of non-compactness and for essential spectral radius on suitable Banach functions spaces (including $L^2$, see Theorem \ref{thbegin} below). In \cite{DP05} and  \cite{P06}, also generalizations of inequality (\ref{prva}) for products  and sums of geometric symmetrizations of positive kernel operators were proved  (see inequalities (\ref{aufstand}) and (\ref{20}) below).

In \cite{Sh07}, Shen and Huang studied weighted geometric symmetrizations $S_{\alpha} (A)=[a_{ij} ^{\alpha} a_{ji}^{1-\alpha}]$ for $\alpha \in [0,1]$ and for nonnegative $n\times n$ matrices. They showed that for a given square nonnegative matrix $A$ the function $\alpha \mapsto r (S_{\alpha} (A))$ is decreasing on $[0,\frac{1}{2}]$ and increasing on $[\frac{1}{2}, 1]$ (\cite[Theorem 3.3]{Sh07}). They also proved an analogous result for the operator (largest singular value) norm (\cite[Theorem 2.3]{Sh07}).  In \cite[Theorem 2.7]{BP21}, we  obtained an analogous result for the spectral radius, essential spectral radius, operator norm, Hausdorff measure of non-compactness and numerical radius of weighted geometric symmetrizations of a given positive kernel operator on $L^2$. In \cite{BP24a} we further extended a technique of Shen and Huang to obtain additional results.  For instance, it was proved in \cite[Corollary 3.3]{BP24a} 
 that also the function 
$\alpha \mapsto r (S_{\alpha}(A_1)S_{\alpha}(A_2))$ is decreasing on $[0, \frac{1}{2}]$ and increasing on $[\frac{1}{2}, 1]$, where $A_1$ and $A_2$ are positive kernel operators on $L^2$ (and that the analogue of this result holds also for the essential spectral radius). In the current article we generalize the results of \cite{BP24a} to the setting of bounded sets of positive kernel operators on  $L^2$ by establishing new results  for joint and generalized spectral radius, for essential versions of  joint and generalized spectral radius and for operator norm,  Hausdorff measure of non-compactness and numerical radius of such sets.
%
%
%

The rest of the article is organized in the following way. In Section 2 we recall some definitions and results that will be needed in our proofs and in Section 3 we obtain new results. 
The main results of this article are Theorems 
\ref{thrun} and \ref{th2_joint}.

\section{Preliminaries}
\vspace{1mm}

Let $\mu$ be a $\sigma$-finite positive measure on a $\sigma$-algebra $\cM$ of subsets of a non-void set $X$.
Let $M(X,\mu)$ be the vector space of all equivalence classes of (almost everywhere equal)
complex measurable functions on $X$. A Banach space $L \subseteq M(X,\mu)$ is
called a {\it Banach function space} if $f \in L$, $g \in M(X,\mu)$,
and $|g| \le |f|$ imply that $g \in L$ and $\|g\| \le \|f\|$. Throughout the article, it is assumed that  $X$ is the carrier of $L$, that is, there is no subset $Y$ of $X$ of
 strictly positive measure with the property that $f = 0$ a.e. on $Y$ for all $f \in L$ (see \cite{Za83}).

%

Standard examples of Banach function spaces are 
Euclidean spaces, $L^p (X,\mu)$ spaces for $1\le p \le \infty$,  the space $c_0\in \mathcal{L}$
of all null convergent sequences  (equipped with the usual norms and the counting measure) 
and other less known examples such as Orlicz, Lorentz,  Marcinkiewicz  and more general  rearrangement-invariant spaces (see e.g. \cite{BS88, CR07, KM99} and the references cited there), which are important e.g. in interpolation theory and in the theory of partial differential equations.
 Recall that the cartesian product $L=E\times F$
of Banach function spaces is again a Banach function space, equipped with the norm
$\|(f, g)\|_L=\max \{\|f\|_E, \|g\|_F\}$. 

If $\{f_n\}_{n\in \mathbb{N}} \subset M(X,\mu)$ is a decreasing real valued sequence and
$f=\inf\{f_n \in M(X,\mu): n \in \NN \}$, then we write $f_n \downarrow f$. 
A Banach function space $L$ has an {\it order continuous norm}, if $0\le f_n \downarrow 0$
implies $\|f_n\|_L \to 0$ as $n \to \infty$. It is well known that spaces $L^p  (X,\mu)$, $1\le p< \infty$, have order continuous
norm. Moreover, the norm of any reflexive Banach function space is
order continuous. 
In particular, we are interested in  Banach function spaces $L$ such that $L$ and its Banach dual space $L^*$ have order continuous norms. Examples of such spaces are $L^p  (X,\mu)$, $1< p< \infty$, while the space
$L=c_0$ 
is an example of a non-reflexive Banach sequence space, such that $L$ and  $L^*=l^1$ have order continuous
norms. 
  
By an {\it operator} on a Banach function space $L$ we always mean a linear
operator on $L$.  An operator $K$ on $L$ is said to be {\it positive}
if it maps nonnegative functions to nonnegative ones, i.e., $KL_+ \subset L_+$, where $L_+$ denotes the positive cone $L_+ =\{f\in L : f\ge 0 \; \mathrm{a.e.}\}$.
Given operators $K$ and $H$ on $L$, we write $K \ge H$ if the operator $K - H$ is positive.


Recall that a positive  operator $K$ is always bounded, i.e., its operator norm
\be
\|K\|=\sup\{\|Kf\|_L : f\in L, \|f\|_L \le 1\}=\sup\{\|Kf\|_L : f\in L_+, \|f\|_L \le 1\}
\label{equiv_op}
\ee
is finite (the second equality in (\ref{equiv_op}) follows from $|Kf| \le K|f|$ for $f\in L$).
Also, its spectral radius $r(K)$ is always contained in the spectrum.

In the special case $L= L^2(X, \mu)$ we can define the {\it numerical radius} $w(K)$ of
a bounded operator $K$ on $L^2(X, \mu)$ by
$$ w(K) = \sup \{ | \langle K f, f \rangle | : f \in L^2(X, \mu), \| f \|_2 = 1 \} . $$
If, in addition, $K$ is positive, then it is easy to prove that
$$ w(K) = \sup \{ \langle K f, f \rangle  : f \in L^2(X, \mu)_+ , \| f \|_2 = 1 \} . $$
From this it follows easily that $w(K) \le w(H)$ for all positive operators $K$ and $H$ on $L^2(X, \mu)$ with $K \le H$.

An operator $K$ on a Banach function space $L$ is called a {\it kernel operator} if
there exists a $\mu \times \mu$-measurable function
$k(x,y)$ on $X \times X$ such that, for all $f \in L$ and for almost all $x \in X$,
$$ \int_X |k(x,y) f(y)| \, d\mu(y) < \infty \ \ \ {\rm and} \ \
   (Kf)(x) = \int_X k(x,y) f(y) \, d\mu(y)  .$$
One can check that a kernel operator $K$ is positive iff
its kernel $k$ is non-negative almost everywhere.

Let $L$ be a Banach function space such that $L$ and $L^*$ have order
continuous norms and let $K$ and $H$ be  positive kernel operators on $L$. By $\gamma (K)$ we denote the Hausdorff measure of
non-compactness of $K$, i.e.,
$$\gamma (K) = \inf\left\{ \delta >0 : \;\; \mathrm{there}\;\; \mathrm{is} \;\; \mathrm{a}\;\; \mathrm{finite}\;\; M \subset L \;\;\mathrm{such} \;\; \mathrm{that} \;\; K(D_L) \subset M + \delta D_L  \right\},$$
where $D_L =\{f\in L : \|f\|_L \le 1\}$. Then $\gamma (K) \le \|K\|$, $\gamma (K+H) \le \gamma (K) + \gamma (H)$, $\gamma(KH) \le \gamma (K)\gamma (H)$ and $\gamma (\alpha K) =\alpha \gamma (K)$ for $\alpha \ge 0$. Also
$0 \le K\le H$  implies $\gamma (K) \le \gamma (H)$ (see e.g. \cite[Corollary 4.3.7 and Corollary 3.7.3]{Me91}). Let $r_{ess} (K)$ denote the essential spectral radius of $K$, i.e., the spectral radius of the Calkin image of $K$ in the Calkin algebra. Then
\be
 r_{ess} (K) =\lim _{j \to \infty} \gamma (K^j)^{1/j}=\inf _{j \in \NN} \gamma (K^j)^{1/j}
\label{esslim=inf}
\ee
and $r_{ess} (K) \le \gamma (K)$. Recall that if $L=L^2(X, \mu)$, then $\gamma (K^*) = \gamma (K)$ and $r_{ess} (K^*)=r_{ess} (K)  $, where $K^*$ denotes the adjoint of $K$ (see e.g. \cite[Proposition 4.3.3, Theorems 4.3.6 and 4.3.13 and Corollary 3.7.3]{Me91}, \cite[Theorem 1]{Nussbaum70}, \cite{LP24}). Note that equalities (\ref{esslim=inf}) and  $r_{ess} (K^*)=r_{ess} (K)  $ are valid for any bounded operator $K$ on a given complex Banach space $L$ (see e.g. \cite[Theorem 4.3.13 and Proposition 4.3.11]{Me91}, \cite[Theorem 1]{Nussbaum70}).


It is well-known that kernel operators play an important, often even central, role in a variety of applications from differential and integro-differential equations, problems from physics
(in particular from thermodynamics), engineering, statistical and economic models, etc (see e.g. \cite{J82, P19} 
and the references cited there).
For the theory of Banach function spaces and more general Banach lattices we refer the reader to the books \cite{Za83, BS88, AA02, AB85, Me91}.

\bigskip

Let $\Sigma$ be a bounded set of bounded operators on a complex Banach space $L$.
For $m \ge 1$, let
$$\Sigma ^m =\{A_1A_2 \cdots A_m : A_i \in \Sigma\}.$$
The generalized spectral radius of $\Sigma$ is defined by
\be
r (\Sigma)= \limsup _{m \to \infty} \;[\sup _{A \in \Sigma ^m} r (A)]^{1/m}
\label{genrho}
\ee
and is equal to
$$r (\Sigma)= \sup _{m \in \NN} \;[\sup _{A \in \Sigma ^m} r (A)]^{1/m}.$$
The joint spectral radius of $\Sigma$ is defined by
\be
\hat{r}  (\Sigma)= \lim _{m \to \infty}[\sup _{A \in \Sigma ^m} \|A\|]^{1/m}.
\label{BW}
\ee
Similarly, the generalized essential spectral radius of $\Sigma$ is defined by
\be
r_{ess} (\Sigma)= \limsup _{m \to \infty} \;[\sup _{A \in \Sigma ^m} r_{ess} (A)]^{1/m}
\label{genrhoess}
\ee
and is equal to
$$r_{ess} (\Sigma)= \sup _{m \in \NN} \;[\sup _{A \in \Sigma ^m} r_{ess} (A)]^{1/m}.$$
The joint essential  spectral radius of $\Sigma$ is defined by
\be
\hat{r} _{ess}  (\Sigma)= \lim _{m \to \infty}[\sup _{A \in \Sigma ^m} \gamma (A)]^{1/m}.
\label{jointess}
\ee

It is well known that $r (\Sigma)= \hat{r}  (\Sigma)$ for a precompact nonempty set $\Sigma$ of compact operators on $L$ (see e.g. \cite{ShT00, ShT08, Mo}),
in particular for a bounded set of complex $n\times n$ matrices (see e.g. 
 \cite{E95, SWP97, Dai11, MP12} and the references cited there).
This equality is called the Berger-Wang formula or also the
generalized spectral radius theorem. 
It is known that also the generalized Berger-Wang formula holds, i.e, that for any precompact nonempty  set $\Sigma$ of bounded operators on $L$ we have
$$\hat{r}  (\Sigma) = \max \{r (\Sigma), \hat{r} _{ess}  (\Sigma)\}$$
(see e.g.  \cite{ShT08, Mo, ShT00}). Observe also that it was proved in \cite{Mo} that in the definition of  $\hat{r} _{ess}  (\Sigma)$ one may replace the Hausdorff measure of noncompactness by several other seminorms, for instance it may be replaced by the essential norm.

In general $r (\Sigma)$ and $\hat{r}  (\Sigma)$ may differ even in the case of a bounded set $\Sigma$ of compact positive operators on $L$ (see \cite{SWP97} or also \cite{P17}).
Also, in \cite{Gui82} the reader can find an example of two positive non-compact weighted shifts $A$ and $B$ on $L=l^2$ such that $r(\{A,B\})=0 < \hat{r}(\{A,B\})$. As already noted in \cite{ShT00} also $r_{ess} (\Sigma)$ and $\hat{r} _{ess}  (\Sigma)$ may in general be different.

The theory of the generalized and the joint spectral radius has many important applications for instance to discrete and differential inclusions,
wavelets, invariant subspace theory
(see e.g. 
\cite{Dai11, Wi02, ShT00, ShT08} and the references cited there).
In particular, $\hat{r} (\Sigma)$ plays a central role in determining stability in convergence properties of discrete and differential inclusions. In this
theory the quantity $\log \hat{r} (\Sigma)$ is known as the maximal Lyapunov exponent (see e.g. \cite{Wi02}).

We have 
the following well known facts that hold for all $\rho \in \{r,  \hat{r}, r_{ess}, \hat{r} _{ess}  \}$:
\be
\rho (\Sigma  ^m) = \rho (\Sigma)^m \;\;\mathrm{and}\;\;
\rho (\Psi \Sigma) = \rho (\Sigma\Psi)
\label{again}
\ee
where $\Psi \Sigma =\{AB: A\in \Psi, B\in \Sigma\}$ and $m\in \NN$.

Let $K$ and $H$ be positive kernel operators on a Banach function space $L$ with kernels $k$ and $h$ respectively,
and $\alpha \ge 0$.
The \textit{Hadamard (or Schur) product} $K \circ H$ of $K$ and $H$ is the kernel operator
with kernel equal to $k(x,y)h(x,y)$ at point $(x,y) \in X \times X$ which can be defined (in general)
only on some order ideal of $L$. Similarly, the \textit{Hadamard (or Schur) power}
$K^{(\alpha)}$ of $K$ is the kernel operator with kernel equal to $(k(x, y))^{\alpha}$
at point $(x,y) \in X \times X$ which can be defined only on some order ideal of $L$.

Let $K_1 ,\ldots, K_m$ be positive kernel operators on a Banach function space $L$,
and $\alpha _1, \ldots, \alpha _m$ nonnegative numbers such that $\sum_{j=1}^m \alpha _j = 1$.
Then the {\it  Hadamard weighted geometric mean}
$K = K_1 ^{( \alpha _1)} \circ K_2 ^{(\alpha _2)} \circ \cdots \circ K_m ^{(\alpha _m)}$ of
the operators $K_1 ,\ldots, K_m$ is a positive kernel operator defined
on the whole space $L$, since $K \le \alpha _1 K_1 + \alpha _2 K_2 + \ldots + \alpha _m K_m$ by the inequality between the weighted arithmetic and geometric means.

%

 Let us recall  the following result, which was proved in \cite[Theorem 2.2]{DP05} and
\cite[Theorem 5.1 and Example 3.7]{P06} (see also e.g. \cite[Theorem 2.1]{P17}).

\begin{theorem}
\label{thbegin}
Let $\{K_{i j}\}_{i=1, j=1}^{n, m}$ be positive kernel operators on a Banach function space $L$ and  $\alpha _1$, $\alpha _2$,..., $\alpha _m$  nonnegative numbers.

If 
$\sum_{j=1}^{m} \alpha _j = 1$, then the positive kernel operator
\be
K:= \left(K_{1 1}^{(\alpha _1)} \circ \cdots \circ K_{1 m}^{(\alpha _m)}\right) \ldots \left(K_{n 1}^{(\alpha _1)} \circ \cdots \circ K_{n m}^{(\alpha _m)} \right)
\label{osnovno}
\ee
satisfies the following inequalities
\begin{equation}
K \le
(K_{1 1} \cdots  K_{n 1})^{(\alpha _1)} \circ \cdots
\circ (K_{1 m} \cdots K_{n m})^{(\alpha _m)} , \\
\label{norm2}
\end{equation}
\begin{eqnarray}
\nonumber
\left\|K \right\| &\le &\left\|(K_{1 1} \cdots  K_{n 1})^{(\alpha _1)} \circ \cdots
\circ (K_{1 m} \cdots K_{n m})^{(\alpha _m)} \right\|\\
&\le&\left\|K_{1 1} \cdots  K_{n 1}\right\|^{\alpha _1}\cdots\left\|K_{1 m} \cdots  K_{n m}\right\|^{\alpha _m}
\label{spectral2}
\end{eqnarray}
\begin{eqnarray}
\nonumber
r\left(K \right)& \le &r\left((K_{1 1} \cdots  K_{n 1})^{(\alpha _1)} \circ \cdots
\circ (K_{1 m} \cdots A_{n m})^{(\alpha _m)}\right)\\
&\le&r\left( K_{1 1} \cdots  K_{n 1} \right)^{\alpha _1} \cdots
r\left( K_{1 m} \cdots K_{n m}\right)^{\alpha _m} .
\label{tri}
\end{eqnarray}
If, in addition, $L$ and $L^*$ have order continuous norms, then
\begin{eqnarray}
\nonumber
\gamma (K) & \le &\gamma \left((K_{1 1} \cdots  K_{n 1})^{(\alpha _1)} \circ \cdots
\circ (K_{1 m} \cdots K_{n m})^{(\alpha _m)}\right)\\
 &\le &
 \label{meas_noncomp}
\gamma (K_{1 1} \cdots  K_{n 1})^{\alpha _1} \cdots \gamma(K_{1 m} \cdots K_{n m})^{\alpha _m}, \\
\nonumber
r_{ess} \left(K \right) & \le &r_{ess} \left((K_{1 1} \cdots  K_{n 1})^{(\alpha _1)} \circ \cdots
\circ (K_{1 m} \cdots K_{n m})^{(\alpha _m)}\right)\\
&\le  &
\label{ess_spectral}
r_{ess} \left( K_{1 1} \cdots  K_{n 1} \right)^{\alpha _1} \cdots
r_{ess} \left( K_{1 m} \cdots K_{n m}\right)^{\alpha _m} .
\end{eqnarray}
If, in addition, $L=L^2(X, \mu)$, then 
\begin{eqnarray}
\nonumber
w (K) & \le &w \left((K_{1 1} \cdots  K_{n 1})^{(\alpha _1)} \circ \cdots
\circ (K_{1 m} \cdots K_{n m})^{(\alpha _m)}\right)\\
 &\le &
 \label{num_rad}
w(K_{1 1} \cdots  K_{n 1})^{\alpha _1} \cdots w(K_{1 m} \cdots K_{n m})^{\alpha _m}.
\end{eqnarray}
\label{DBPfs}
\end{theorem}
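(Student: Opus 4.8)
The plan is to reduce all six displayed inequalities to the single master operator inequality (\ref{norm2}), which carries the entire analytic content; everything else follows from monotonicity of the relevant quantity on positive operators, from a H\"older/duality argument, or from a Gelfand-type power trick. I would first establish (\ref{norm2}) at the level of kernels. Writing $k_{ij}$ for the kernel of $K_{ij}$, the operator on the left of (\ref{norm2}) is the composition of the Hadamard means with kernels $\prod_j k_{ij}(\cdot,\cdot)^{\alpha_j}$, so its kernel at $(x_0,x_n)$ equals
\[
\int \cdots \int \prod_{j=1}^m \Bigl(\prod_{i=1}^n k_{ij}(x_{i-1},x_i)\Bigr)^{\alpha_j}\, d\mu(x_1)\cdots d\mu(x_{n-1}).
\]
Applying the generalized H\"older inequality with exponents $1/\alpha_j$ (legitimate since $\sum_j\alpha_j=1$) to the product measure pulls the exponents outside the integral and bounds this by $\prod_j\bigl(\int\cdots\int\prod_i k_{ij}(x_{i-1},x_i)\,d\mu\bigr)^{\alpha_j}$. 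The inner integral is exactly the kernel of $K_{1j}\cdots K_{nj}$, so the right-hand side is the kernel of $(K_{11}\cdots K_{n1})^{(\alpha_1)}\circ\cdots\circ(K_{1m}\cdots K_{nm})^{(\alpha_m)}$, which is (\ref{norm2}).

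With (\ref{norm2}) in hand, the left inequalities of (\ref{spectral2}), (\ref{tri}), (\ref{meas_noncomp}), (\ref{ess_spectral}) and (\ref{num_rad}) are immediate from the monotonicity of each quantity on positive operators: $0\le A\le B$ gives $\|A\|\le\|B\|$, $r(A)\le r(B)$, $\gamma(A)\le\gamma(B)$ and $w(A)\le w(B)$ (all recalled in Section 2), and since $0\le A\le B$ forces $A^j\le B^j$ and hence $\gamma(A^j)\le\gamma(B^j)$, also $r_{ess}(A)\le r_{ess}(B)$ through (\ref{esslim=inf}). For the right inequalities of (\ref{spectral2}) and (\ref{num_rad}) I would run the same H\"older computation against test functions. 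Setting $H_j=K_{1j}\cdots K_{nj}$ and $H=H_1^{(\alpha_1)}\circ\cdots\circ H_m^{(\alpha_m)}$, for $f\in L_+$ and $\phi\in (L^*)_+$ I write $f(y)\phi(x)=\prod_j(f(y)\phi(x))^{\alpha_j}$ and obtain $\langle Hf,\phi\rangle\le\prod_j\langle H_jf,\phi\rangle^{\alpha_j}\le\|f\|_L\|\phi\|_{L^*}\prod_j\|H_j\|^{\alpha_j}$; taking suprema and using that the norm of a positive element is recovered by pairing with $(L^*)_+$ gives the right inequality of (\ref{spectral2}). On $L^2$ the identical computation with $\phi=f$ and $w(H)=\sup_{f\ge0,\|f\|=1}\langle Hf,f\rangle$ gives the right inequality of (\ref{num_rad}).

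For the right inequalities of (\ref{tri}) and (\ref{ess_spectral}) I would use a power trick. Applying (\ref{norm2}) with $n$ replaced by $k$ and every row equal to $(H_1,\dots,H_m)$ yields $H^k\le (H_1^k)^{(\alpha_1)}\circ\cdots\circ(H_m^k)^{(\alpha_m)}$. Combining this with the norm bound just proved gives $\|H^k\|\le\prod_j\|H_j^k\|^{\alpha_j}$; taking $k$-th roots and letting $k\to\infty$ through Gelfand's formula $r=\lim_k\|\cdot^k\|^{1/k}$ produces the right inequality of (\ref{tri}). Replacing $\|\cdot\|$ by $\gamma$ and Gelfand's formula by (\ref{esslim=inf}) gives the right inequality of (\ref{ess_spectral}) in exactly the same way, \emph{provided} the corresponding inequality for $\gamma$ is available.

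The genuinely hard step, and the main obstacle, is therefore the right inequality of (\ref{meas_noncomp}), namely the log-submultiplicativity $\gamma\bigl(H_1^{(\alpha_1)}\circ\cdots\circ H_m^{(\alpha_m)}\bigr)\le\prod_j\gamma(H_j)^{\alpha_j}$ of the Hausdorff measure of non-compactness under the Hadamard geometric mean. The test-function computation that handled $\|\cdot\|$ and $w$ does not directly manufacture a finite $\delta$-net, so I would instead reduce it to the norm inequality by a cut-off argument that is compatible with the Hadamard mean: restricting each kernel to $E\times E$ for a set $E$ of finite measure and capping it at a level $N$ commutes with forming the weighted geometric mean, because $\chi_E^{\alpha_j}=\chi_E$ and $\sum_j\alpha_j=1$. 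Order continuity of the norms of $L$ and $L^*$ makes the truncated operators compact and controls the tails, so $\gamma(H)$ is estimated by the norm of the untruncated part, to which the H\"older bound applies factorwise. The delicate point is making the approximation uniform across the $m$ factors so that the tail estimates multiply correctly; this is precisely where the hypotheses that $L$ and $L^*$ have order continuous norms are essential.
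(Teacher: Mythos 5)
You should first note that the paper itself contains no proof of Theorem \ref{thbegin}: it is explicitly recalled from \cite[Theorem 2.2]{DP05} and \cite[Theorem 5.1 and Example 3.7]{P06}, so there is no internal argument to compare against and your proposal has to stand on its own. Most of it does, and it follows the standard route of \cite{DP05}: the kernel-level generalized H\"older computation correctly yields (\ref{norm2}); the left-hand inequalities of (\ref{spectral2})--(\ref{num_rad}) then follow from monotonicity of $\|\cdot\|$, $r$, $\gamma$, $r_{ess}$ and $w$ on positive operators; and the power trick combined with Gelfand's formula, resp. (\ref{esslim=inf}), correctly reduces the right-hand inequalities of (\ref{tri}) and (\ref{ess_spectral}) to those of (\ref{spectral2}) and (\ref{meas_noncomp}). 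One small caveat in your duality argument for the right-hand side of (\ref{spectral2}): the H\"older step against $\phi\in(L^*)_+$ tacitly assumes $\phi$ is an integral functional (i.e. lies in the associate space), which a general Banach functional need not be; this is repairable, since duality can be avoided by observing $Hf\le (H_1f)^{\alpha_1}\cdots(H_mf)^{\alpha_m}$ pointwise and using the lattice H\"older inequality $\|g_1^{\alpha_1}\cdots g_m^{\alpha_m}\|\le\|g_1\|^{\alpha_1}\cdots\|g_m\|^{\alpha_m}$, itself a consequence of the AM--GM inequality and the lattice norm property.

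The genuine gap is where you flag it: the right-hand inequality of (\ref{meas_noncomp}), which is the actual content of \cite[Theorem 5.1]{P06}, and your truncation sketch does not close it. Concretely: (i) capping a kernel at level $N$ does \emph{not} commute with the weighted geometric mean (only the restriction to $E\times E$ does; e.g. $\min(4,2)^{1/2}\min(1,2)^{1/2}=\sqrt{2}\ne 2=\min\bigl((4\cdot 1)^{1/2},2\bigr)$), so ``the truncation of $H$'' and ``the geometric mean of the truncations of the $H_j$'' are different operators. (ii) Whichever you choose, the pointwise estimate you would need --- that the tail of the geometric mean is dominated by the geometric mean of the tails --- fails in the required direction: inequality (\ref{mitr2}) with $n=2$, applied to $h_j=t_j+(h_j-t_j)$, gives $\prod_j h_j^{\alpha_j}-\prod_j t_j^{\alpha_j}\ge\prod_j(h_j-t_j)^{\alpha_j}$, the \emph{reverse} of what a factorwise H\"older bound on the tails requires. (iii) Compactness of the truncated operators is not automatic on a general Banach function space: an operator with kernel dominated by $N\chi_{E\times E}$ is dominated by a rank-one positive operator and its compactness requires the Dodds--Fremlin domination theorem --- this is precisely where order continuity of $L$ and $L^*$ enters, and you never invoke it. (iv) Even with compact truncations $T\le H$ one only gets $\gamma(H)\le\|H-T\|$; one still needs a nontrivial lemma asserting that $\gamma$ of a positive kernel operator is actually \emph{recovered} as the limit of $\|H-T_n\|$ along such truncations, so that the estimate can be transported factor by factor. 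Items (ii)--(iv) are the substance of the supporting lemmas in \cite{P06}; without them the $\gamma$-inequality, and hence also the right-hand side of (\ref{ess_spectral}), remains unproved.
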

The following result is a special case  of Theorem \ref{DBPfs}.
\begin{theorem}
\label{special_case}
Let $K_1 ,\ldots, K_m$ be positive kernel operators on a Banach function space  $L$
and $\alpha _1, \ldots, \alpha _m$ nonnegative numbers. 

If $\sum_{j=1}^m \alpha _j = 1$, then 
\be
 \|K_1 ^{( \alpha _1)} \circ K_2 ^{(\alpha _2)} \circ \cdots \circ K_m ^{(\alpha _m)} \| \le
  \|K_1\|^{ \alpha _1}  \|K_2\|^{\alpha _2} \cdots \|K_m\|^{\alpha _m}
\label{gl1nrm}
\ee
and
\be
 r(K_1 ^{( \alpha _1)} \circ K_2 ^{(\alpha _2)} \circ \cdots \circ K_m ^{(\alpha _m)} ) \le
r(K_1)^{ \alpha _1} \, r(K_2)^{\alpha _2} \cdots r(K_m)^{\alpha _m} .
\label{gl1vecr}
\ee
If, in addition, $L$ and $L^*$ have order continuous norms, then
\be
 \gamma (K_1 ^{( \alpha _1)} \circ K_2 ^{(\alpha _2)} \circ \cdots \circ K_m ^{(\alpha _m)} )\le
  \gamma(K_1)^{ \alpha _1}  \gamma(K_2)^{\alpha _2} \cdots \gamma(K_m)^{\alpha _m}
\label{gl1meas_nonc}
\ee
and
\be
 r_{ess}(K_1 ^{( \alpha _1)} \circ K_2 ^{(\alpha _2)} \circ \cdots \circ K_m ^{(\alpha _m)} ) \le
r_{ess }(K_1)^{ \alpha _1} \, r_{ess}(K_2)^{\alpha _2} \cdots r_{ess}(K_m)^{\alpha _m} .
\label{gl1vecress}
\ee
%
If, in addition, $L=L^2(X, \mu)$, then 
\be
 w(K_1 ^{( \alpha _1)} \circ K_2 ^{(\alpha _2)} \circ \cdots \circ K_m ^{(\alpha _m)} )\le
  w(K_1)^{ \alpha _1}  w(K_2)^{\alpha _2} \cdots w(K_m)^{\alpha _m}.
  \ee
\end{theorem}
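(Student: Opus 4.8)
The plan is to derive Theorem \ref{special_case} as the special case $n=1$ of Theorem \ref{thbegin}. First I would set $n=1$ in the statement of Theorem \ref{thbegin} and relabel $K_{1j}$ as $K_j$ for $j=1,\ldots,m$. With a single row ($n=1$), each product $K_{1j}\cdots K_{nj}$ appearing in Theorem \ref{thbegin} collapses to the single operator $K_{1j}=K_j$, and the operator $K$ of (\ref{osnovno}) becomes exactly
\[
K = K_1^{(\alpha_1)}\circ K_2^{(\alpha_2)}\circ\cdots\circ K_m^{(\alpha_m)},
\]
which is precisely the object whose operator norm, spectral radius, Hausdorff measure of non-compactness, essential spectral radius and numerical radius we wish to bound.

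Next I would observe that for $n=1$ the middle term in each of the inequality chains (\ref{spectral2}), (\ref{tri}), (\ref{meas_noncomp}), (\ref{ess_spectral}) and (\ref{num_rad}) coincides with $K$ itself, since $(K_{11}\cdots K_{n1})^{(\alpha_1)}\circ\cdots\circ(K_{1m}\cdots K_{nm})^{(\alpha_m)}$ reduces to $K_1^{(\alpha_1)}\circ\cdots\circ K_m^{(\alpha_m)}=K$. Hence the first inequality in each chain becomes an equality and only the second, right-hand bound survives. Reading these right-hand bounds under the relabeling yields precisely (\ref{gl1nrm}) for the operator norm, (\ref{gl1vecr}) for the spectral radius, (\ref{gl1meas_nonc}) for $\gamma$, (\ref{gl1vecress}) for $r_{ess}$, and the stated inequality for $w$.

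The only care needed is hypothesis bookkeeping, matching each conclusion to the corresponding hypothesis in Theorem \ref{thbegin}: the bounds (\ref{gl1nrm}) and (\ref{gl1vecr}) require only $\sum_{j=1}^m\alpha_j=1$ on a general Banach function space $L$; the bounds (\ref{gl1meas_nonc}) and (\ref{gl1vecress}) additionally require that $L$ and $L^*$ have order continuous norms; and the numerical radius bound requires $L=L^2(X,\mu)$. In every case the condition $\sum_{j=1}^m\alpha_j=1$ is exactly what guarantees, via the weighted arithmetic--geometric mean domination $K\le\sum_{j=1}^m\alpha_j K_j$, that the Hadamard weighted geometric mean $K$ is a well-defined positive kernel operator on all of $L$, so that Theorem \ref{thbegin} applies verbatim.

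I do not expect a genuine obstacle here, since the result is an immediate specialization; the only potential pitfall is mis-tracking which of the three hypotheses on $L$ is in force for which of the five inequalities, which the paragraph above resolves.
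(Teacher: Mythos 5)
Your proposal is correct and matches the paper exactly: the paper itself presents Theorem \ref{special_case} simply as the $n=1$ specialization of Theorem \ref{thbegin}, which is precisely the reduction you carry out. Your extra care in matching each conclusion to the corresponding hypothesis on $L$ is consistent with the paper's statement and introduces no discrepancy.
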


Let $\Psi _1, \ldots , \Psi _m$ be bounded sets of positive kernel operators on a Banach function space $L$ and let $\alpha _1, \ldots \alpha _m$ be positive numbers such that
$\sum _{i=1} ^m \alpha _i = 1$. Then the bounded set of positive kernel operators on $L$, defined by
$$\Psi _1 ^{( \alpha _1)} \circ \cdots \circ \Psi _m ^{(\alpha _m)}=\{ A_1 ^{( \alpha _1)} \circ \cdots \circ A _m ^{(\alpha _m)}: A_1\in \Psi _1, \ldots, A_m \in \Psi _m \},$$
is called the {\it weighted Hadamard (Schur) geometric mean} of sets $\Psi _1, \ldots , \Psi _m$. The set
$\Psi _1 ^{(\frac{1}{m})} \circ \cdots \circ \Psi _m ^{(\frac{1}{m})}$ is called the  {\it Hadamard (Schur) geometric mean} of sets $\Psi _1, \ldots , \Psi _m$. 

We will need the following well-known inequalities (see e.g. \cite{Mi}). For
nonnegative measurable functions and for nonnegative numbers $\alpha$ and $\beta$  such that $\alpha+\beta \ge 1$ we have
\be
f_1^{\alpha}g_1^{\beta}+\cdots+f_m^{\alpha}g_m^{\beta} \le (f_1+\cdots+f_m)^{\alpha}(g_1+\cdots+g_m)^{\beta}.
\label{mitrn} 
\ee
More generally, for nonnegative measurable functions $\{f _{i j}\}_{i=1, j=1}^{n, m}$ and for nonnegative numbers  $\alpha_j$, $j=1, \ldots , m$,   such that $\sum _{j=1} ^m \alpha _j \ge 1$ we have
\be
(f_{11}^{\alpha _1} \cdots f_{1m}^{\alpha _m})+\cdots+(f_{n1}^{\alpha _1} \cdots f_{nm}^{\alpha _m}) \le (f_{11}+\cdots+f_{n1})^{\alpha _1} \cdots (f_{1m}+\cdots+f_{nm})^{\alpha _m}.
\label{mitr2}
\ee

The following result was established in \cite[Theorems 3.2(i) and 3.6(i)]{BP22b} by applying Theorem \ref{thbegin} and (\ref{mitr2}).

\begin{theorem} \label{finally_1}
Let $\{\Psi _{i j}\}_{i=1, j=1}^{k, m}$ be bounded sets of positive kernel operators on a Banach function space $L$,  let $\rho \in \{r, \hat{r} \}$ and assume that
 $\alpha _1, \ldots , \alpha _m$ are positive numbers such that  $\sum _{i=1} ^m \alpha _i = 1$. 

Then
\begin{eqnarray}
\nonumber
& & \rho \left(\left(\Psi_{1 1}^{(\alpha _1)} \circ \cdots \circ \Psi_{1 m}^{(\alpha _m)}\right) \ldots \left(\Psi_{k 1}^{(\alpha _1)} \circ \cdots \circ \Psi_{k m}^{(\alpha _m)} \right) \right) \\
\nonumber
& \le &\rho \left((\Psi_{1 1} \cdots  \Psi_{k 1})^{(\alpha _1)} \circ \cdots
\circ (\Psi_{1 m} \cdots \Psi_{k m})^{(\alpha _m)}\right)\\
&\le&\rho \left( \Psi_{1 1} \cdots  \Psi_{k 1} \right)^{\alpha _1} \cdots
\rho \left( \Psi_{1 m} \cdots \Psi_{k m}\right)^{\alpha _m} 
\label{lepa_nwg}
\end{eqnarray}
and
\begin{eqnarray}
\nonumber
& & \rho \left(\left(\Psi_{1 1}^{(\alpha _1)} \circ \cdots \circ \Psi_{1 m}^{(\alpha _m)}\right) + \ldots + \left(\Psi_{k 1}^{(\alpha _1)} \circ \cdots \circ \Psi_{k m}^{(\alpha _m)} \right) \right) \\
\nonumber
& \le &\rho \left((\Psi_{1 1}+ \cdots  + \Psi_{k 1})^{(\alpha _1)} \circ \cdots
\circ (\Psi_{1 m} + \cdots  + \Psi_{k m})^{(\alpha _m)}\right)\\
&\le&\rho \left( \Psi_{1 1} + \cdots  + \Psi_{k 1} \right)^{\alpha _1} \cdots
\rho \left( \Psi_{1 m} + \cdots  + \Psi_{k m}\right)^{\alpha _m} .
\label{lepa2_nwg}
\end{eqnarray}

If, in addition, $L$ and $L^*$ have order continuous norms, then (\ref{lepa_nwg}) and (\ref{lepa2_nwg}) hold also for $\rho \in \{ r_{ess}, \hat{r}_{ess} \}$.


\end{theorem}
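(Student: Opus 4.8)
The plan is to split the double inequality into two independent facts: a \emph{comparison} step (the first inequality), which comes from an entrywise domination of operators, and the \emph{main} estimate (the second inequality), which comes from applying Theorem \ref{thbegin} to the $p$-fold products entering the definition of $\rho$. Throughout I write $\Pi_j := \Psi_{1j}\cdots\Psi_{kj}$ for the product of the $j$-th ``column'' of sets (so that the right-hand side of (\ref{lepa_nwg}) is $\prod_{j=1}^m\rho(\Pi_j)^{\alpha_j}$), $\Theta := \Pi_1^{(\alpha_1)}\circ\cdots\circ\Pi_m^{(\alpha_m)}$ for the middle set, and $\Phi := \Xi_1\cdots\Xi_k$ with $\Xi_i := \Psi_{i1}^{(\alpha_1)}\circ\cdots\circ\Psi_{im}^{(\alpha_m)}$ for the left-hand set. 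Since $\sum_j\alpha_j=1$, each weighted geometric mean is a genuine positive kernel operator on all of $L$, and each of $\Pi_j,\Theta,\Phi$ is again a bounded set of positive kernel operators, so all quantities are well defined.

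For the first inequality the engine is the entrywise domination (\ref{norm2}) of Theorem \ref{thbegin}: for any $A_{ij}\in\Psi_{ij}$ one has, as positive operators,
\[
(A_{11}^{(\alpha_1)}\circ\cdots\circ A_{1m}^{(\alpha_m)})\cdots(A_{k1}^{(\alpha_1)}\circ\cdots\circ A_{km}^{(\alpha_m)}) \le (A_{11}\cdots A_{k1})^{(\alpha_1)}\circ\cdots\circ(A_{1m}\cdots A_{km})^{(\alpha_m)},
\]
whose left side is the generic element of $\Phi$ and whose right side lies in $\Theta$. Thus every element of $\Phi$ is dominated by an element of $\Theta$. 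Using that $0\le P_s\le Q_s$ for positive operators implies $P_1\cdots P_p\le Q_1\cdots Q_p$, this lifts to the powers: every $A\in\Phi^p$ satisfies $A\le B$ for some $B\in\Theta^p$. I would then invoke the monotonicity under $\le$ of each of $r(\cdot),\|\cdot\|,\gamma(\cdot),r_{ess}(\cdot)$ on positive operators (for $r_{ess}$ via $r_{ess}(A)=\lim_j\gamma(A^j)^{1/j}$ together with $\gamma$-monotonicity), take the supremum over $\Phi^p$ and $\Theta^p$, the $p$-th root, and finally $\sup_p$ respectively $\lim_p$ in (\ref{genrho})--(\ref{jointess}), to obtain $\rho(\Phi)\le\rho(\Theta)$ for all four $\rho$.

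For the second inequality I fix $p$ and take a generic $A\in\Theta^p$, say $A=\prod_{s=1}^p\big((D_1^{(s)})^{(\alpha_1)}\circ\cdots\circ(D_m^{(s)})^{(\alpha_m)}\big)$ with $D_j^{(s)}\in\Pi_j$. Applying Theorem \ref{thbegin} with the product index $s=1,\dots,p$ (inequalities (\ref{tri}), (\ref{spectral2}), (\ref{ess_spectral}), (\ref{meas_noncomp})) gives, for $\phi$ the appropriate scalar ($r$, $\|\cdot\|$, $r_{ess}$ or $\gamma$),
\[
\phi(A)\le\prod_{j=1}^m\phi\big(D_j^{(1)}\cdots D_j^{(p)}\big)^{\alpha_j}\le\prod_{j=1}^m\Big(\sup_{E\in\Pi_j^p}\phi(E)\Big)^{\alpha_j},
\]
since $D_j^{(1)}\cdots D_j^{(p)}\in\Pi_j^p$. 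Taking the supremum over $A\in\Theta^p$ and the $p$-th root yields
\[
\Big(\sup_{A\in\Theta^p}\phi(A)\Big)^{1/p}\le\prod_{j=1}^m\Big(\big(\sup_{E\in\Pi_j^p}\phi(E)\big)^{1/p}\Big)^{\alpha_j}.
\]
For $\rho\in\{r,r_{ess}\}$ I would use the supremum representation in (\ref{genrho}), (\ref{genrhoess}): each right-hand factor is $\le\rho(\Pi_j)$, so the right side is $\le\prod_j\rho(\Pi_j)^{\alpha_j}$ uniformly in $p$, and $\sup_p$ on the left gives $\rho(\Theta)\le\prod_j\rho(\Pi_j)^{\alpha_j}$. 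For $\rho\in\{\hat{r},\hat{r}_{ess}\}$ the left side tends to $\rho(\Theta)$ and each factor tends to $\rho(\Pi_j)$ as $p\to\infty$ by (\ref{BW}), (\ref{jointess}); continuity of $x\mapsto x^{\alpha_j}$ and of finite products then gives the same bound.

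Finally, the sum version (\ref{lepa2_nwg}) follows from the identical scheme with $\Pi_j$ replaced by $\Psi_{1j}+\cdots+\Psi_{kj}$; the only change is in the first inequality, where the entrywise domination (\ref{norm2}) is replaced by inequality (\ref{mitr2}) applied to the kernels (with $n=k$ and $\sum_j\alpha_j=1$), which yields $\sum_{i=1}^k\big(A_{i1}^{(\alpha_1)}\circ\cdots\circ A_{im}^{(\alpha_m)}\big)\le(A_{11}+\cdots+A_{k1})^{(\alpha_1)}\circ\cdots\circ(A_{1m}+\cdots+A_{km})^{(\alpha_m)}$, and the rest is verbatim. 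The essential statements require $L,L^*$ to have order continuous norms precisely so that (\ref{meas_noncomp}), (\ref{ess_spectral}) and the $\gamma$-monotonicity used above are available. The step I expect to be most delicate is the bookkeeping in the second inequality: keeping the supremum over $\Theta^p$ compatible with the per-column suprema over $\Pi_j^p$ and, for the joint (non-generalized) radii, justifying the passage of the limit through the finite product of $\alpha_j$-powers --- which is exactly where the supremum representation for $r,r_{ess}$ and the genuine limit for $\hat{r},\hat{r}_{ess}$ must be handled separately.
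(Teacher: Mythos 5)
Your argument is correct and follows exactly the route the paper indicates: the theorem is quoted from [BP22b] as being proved "by applying Theorem \ref{thbegin} and (\ref{mitr2})", and your element-by-element domination of powers $\Phi^p$ by $\Theta^p$ followed by monotonicity and the sup/limit definitions of $\rho$ is the same scheme the paper itself spells out for the analogous results (Corollary \ref{finally_nwg} and the lemma preceding Corollary \ref{combination_c}). No substantive differences or gaps.
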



Let $K$ be a positive kernel operator on $L=L^2(X, \mu)$ with a kernel $k$ and let $\alpha \in [0,1]$.
Denote by $S_{\alpha} (K) = K^{(\alpha )} \circ (K^*)^{(1-\alpha)} $ a positive kernel operator on $L$
 with a kernel $s_{\alpha} (k)(x,y)=k^{\alpha}(x,y)k^{1-\alpha}(y,x)$. Note that $S (K) =S_{\frac{1}{2}} (K) $ is a geometric symmetrization of $K$, which is a selfadjoint and positive kernel operator on  $L^2(X, \mu)$ with a kernel $\sqrt{k(x,y)k(y,x)}$. 
 Let $\rho \in \{ r, r_{ess}, \gamma, \|\cdot\|, w \}$. It was proved in \cite[Proposition 2.2 (19), (20)]{BP21} that
\be
\rho (S_{\alpha} (K_{1}) \cdots  S_{\alpha}(K_{n}))
\label{aufstand}
\ee
$$\le \rho \left((K_1 \cdots K_n )^{(\alpha)} \circ ((K_n \cdots K_1)^*)^{(1-\alpha)}  \right) \le \rho (K_1 \cdots K_n )^{\alpha} \, \rho (K_n \cdots K_1 )^{1-\alpha}$$
and
\be
\rho(S_{\alpha}(K_1)+\ldots +S_{\alpha}(K_n)) \le\rho(K_1+\cdots +K_n).
\label{20}
\ee

\bigskip

The following two results were proved in \cite[Theorems 3.1 and 3.5]{BP24a}.   
\begin{theorem}
Let $K_1,\ldots, K_n$ be positive kernel operators on $L=L^2(X, \mu)$.
  For $\rho \in \{ r, r_{ess}, \gamma, \|\cdot\|, w \}$ define $\rho _n : [0,1] \to [0, \infty)$ by
 $$\rho_n(\alpha)=\sqrt{\rho(S_{\alpha} (K_1)S_{\alpha} (K_2)\cdots S_{\alpha} (K_n))\rho(S_{\alpha} (K_n)S_{\alpha} (K_{n-1})\cdots S_{\alpha} (K_1))}.$$
Then $\rho_n$ is decreasing on $[0, \frac{1}{2}]$ and increasing on $[\frac{1}{2}, 1]$.

In particular, $\rho_n(\alpha) \ge \rho_n (\frac{1}{2})$ for each $\alpha \in [0,1]$.
\label{thbgd}
\end{theorem}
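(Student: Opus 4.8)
The plan is to combine two structural features of $\rho_n$: a reflection symmetry $\rho_n(\alpha)=\rho_n(1-\alpha)$ about the point $\tfrac12$, together with a multiplicative (log-)convexity estimate. A function that is convex in the multiplicative sense and symmetric about $\tfrac12$ must attain its minimum there and be monotone on each half of $[0,1]$, which is exactly the asserted behaviour.

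First I would record the adjoint identity $S_{1-\alpha}(K)=S_\alpha(K)^*$. This is immediate from comparing kernels: the kernel of $S_{1-\alpha}(K)$ at $(x,y)$ is $k^{1-\alpha}(x,y)k^{\alpha}(y,x)$, which is precisely $s_\alpha(k)(y,x)$, the kernel of $S_\alpha(K)^*$. Reversing a product under the adjoint then gives $S_{1-\alpha}(K_1)\cdots S_{1-\alpha}(K_n)=\bigl(S_\alpha(K_n)\cdots S_\alpha(K_1)\bigr)^*$, and since $\rho(A^*)=\rho(A)$ holds on $L^2$ for every $\rho\in\{r,r_{ess},\gamma,\|\cdot\|,w\}$ (for $r$ by conjugation symmetry of the spectrum; for $r_{ess}$ and $\gamma$ as recalled in Section 2; for $\|\cdot\|$ and $w$ elementarily, using $\langle A^*f,f\rangle=\overline{\langle Af,f\rangle}$), we get $\rho(S_{1-\alpha}(K_1)\cdots S_{1-\alpha}(K_n))=\rho(S_\alpha(K_n)\cdots S_\alpha(K_1))$ and the same with the two orderings interchanged. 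Substituting these into the definition of $\rho_n$ yields $\rho_n(1-\alpha)=\rho_n(\alpha)$.

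Next I would establish log-convexity. Fix $\alpha_1,\alpha_2\in[0,1]$ and $t\in[0,1]$, and set $\alpha=t\alpha_1+(1-t)\alpha_2$. A direct kernel computation, using $t(1-\alpha_1)+(1-t)(1-\alpha_2)=1-\alpha$, gives the factorization $S_\alpha(K_i)=S_{\alpha_1}(K_i)^{(t)}\circ S_{\alpha_2}(K_i)^{(1-t)}$ for each $i$. Hence the forward product $S_\alpha(K_1)\cdots S_\alpha(K_n)$ is exactly of the form (\ref{osnovno}) with $m=2$, weights $t,1-t$, and rows $\bigl(S_{\alpha_1}(K_i),S_{\alpha_2}(K_i)\bigr)$. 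Applying the inequality of Theorem \ref{thbegin} matching the chosen $\rho$ — namely (\ref{spectral2}), (\ref{tri}), (\ref{meas_noncomp}), (\ref{ess_spectral}) or (\ref{num_rad}) — gives
$$\rho(S_\alpha(K_1)\cdots S_\alpha(K_n))\le \rho(S_{\alpha_1}(K_1)\cdots S_{\alpha_1}(K_n))^{t}\,\rho(S_{\alpha_2}(K_1)\cdots S_{\alpha_2}(K_n))^{1-t},$$
and verbatim the same bound for the reversed product $S_\alpha(K_n)\cdots S_\alpha(K_1)$. Multiplying the two inequalities and taking square roots collapses the right-hand side, giving
$$\rho_n\bigl(t\alpha_1+(1-t)\alpha_2\bigr)\le \rho_n(\alpha_1)^{t}\,\rho_n(\alpha_2)^{1-t}.$$

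Finally I would deduce the monotonicity. Given $0\le\alpha_1<\alpha_2\le\tfrac12$, I choose $t\in[0,1]$ with $\alpha_2=t\alpha_1+(1-t)(1-\alpha_1)$, which is possible because $\alpha_1\le\alpha_2\le\tfrac12\le 1-\alpha_1$; the log-convexity estimate combined with $\rho_n(1-\alpha_1)=\rho_n(\alpha_1)$ then gives $\rho_n(\alpha_2)\le\rho_n(\alpha_1)^{t}\rho_n(\alpha_1)^{1-t}=\rho_n(\alpha_1)$, so $\rho_n$ is decreasing on $[0,\tfrac12]$, and the reflection symmetry forces it to be increasing on $[\tfrac12,1]$, whence $\rho_n(\alpha)\ge\rho_n(\tfrac12)$. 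This multiplicative formulation also sidesteps the edge case where some value vanishes, since it never requires taking logarithms. The routine parts are the kernel identities; the step needing the most care is arranging both the forward and reversed products into the product-of-Hadamard-means form so that Theorem \ref{thbegin} applies uniformly, and confirming $\rho(A^*)=\rho(A)$ simultaneously for all five functionals — the one place where the Hilbert-space self-duality (and hence the restriction to $L^2$) is genuinely used.
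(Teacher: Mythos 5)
Your proof is correct, but it follows a genuinely different route from the one in the paper. The paper (in the proof of the set-valued generalization, Theorem \ref{thrun}, which specializes to this statement) works with the \emph{composition} identity $S_{\alpha_2}(K)=S_{\alpha}(S_{\alpha_1}(K))$ for $\alpha=\frac{\alpha_1+\alpha_2-1}{2\alpha_1-1}$ (resp.\ $\alpha=\frac{\alpha_1+\alpha_2-1}{2\alpha_2-1}$ on the other half), and then applies the two-factor weighted geometric mean bound (\ref{aufstand})/(\ref{bauernprotest}) to the already-symmetrized operators $S_{\alpha_1}(K_i)$; the two halves of $[0,1]$ are treated by separate, mirrored computations. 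You instead prove the \emph{interpolation} identity $S_{t\alpha_1+(1-t)\alpha_2}(K_i)=S_{\alpha_1}(K_i)^{(t)}\circ S_{\alpha_2}(K_i)^{(1-t)}$, feed the forward and reversed products directly into Theorem \ref{thbegin} to get multiplicative convexity $\rho_n(t\alpha_1+(1-t)\alpha_2)\le\rho_n(\alpha_1)^t\rho_n(\alpha_2)^{1-t}$ on all of $[0,1]$, and combine this with the reflection symmetry $\rho_n(1-\alpha)=\rho_n(\alpha)$ coming from $S_{1-\alpha}(K)=S_\alpha(K)^*$ and $\rho(A^*)=\rho(A)$ on $L^2$. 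Both arguments ultimately rest on Theorem \ref{thbegin}, and your auxiliary parameter $t=\frac{1-\alpha_1-\alpha_2}{1-2\alpha_1}$ is in fact the same number as the paper's $\alpha$; but your version isolates a stronger structural fact (log-convexity plus symmetry about $\tfrac12$), from which monotonicity on both halves follows formally and simultaneously, whereas the paper's version is phrased so as to pass verbatim to bounded sets of operators, where the operator identity becomes the set inclusion $S_{\alpha_2}(\Psi)\subset S_\alpha(S_{\alpha_1}(\Psi))$. All the individual steps you use (the kernel identities, $\gamma(K^*)=\gamma(K)$, $r_{ess}(K^*)=r_{ess}(K)$ on $L^2$) are available in Section 2, so there is no gap.
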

\begin{theorem}
Let $K_{ij}$ for $i=1, \ldots , n$ and $j=1, \ldots , m$ be positive kernel operators on $L=L^2(X, \mu)$. 
  For $\rho \in \{ r, r_{ess}, \gamma, \|\cdot\|, w \}$ define $\overline{\rho} _n: [0,1] \to [0, \infty)$ by
 $$\overline{\rho}_n (\alpha)=\left(\rho((S_{\alpha} (K_{11}) + \cdots + S_{\alpha}(K_{1m}))  \cdots (S_{\alpha} (K_{n1}) + \cdots + S_{\alpha}(K_{nm})))\right)^{\frac{1}{2}}\times $$
$$ \left(\rho( (S_{\alpha} (K_{n1}) + \cdots + S_{\alpha}(K_{nm}))\cdots (S_{\alpha} (K_{11}) + \cdots + S_{\alpha}(K_{1m}))) \right)^{\frac{1}{2}}.$$ Then
\be
\overline{\rho} _n (\alpha) \le \rho (\left (K_{11} + \cdots + K_{1m})\cdots (K_{n1} + \cdots + K_{nm}) \right)^{\frac{1}{2}} \times 
\label{nice}
\ee
$$\rho (\left (K_{n1} + \cdots + K_{nm})\cdots (K_{11} + \cdots + K_{1m}) \right)^{\frac{1}{2}} $$
for each $\alpha \in [0,1]$.

Moreover, $\overline{\rho}_n$ is decreasing on $[0, \frac{1}{2}]$ and increasing on $[\frac{1}{2}, 1]$.

In particular, $\overline{\rho}_n(\alpha) \ge \overline{\rho}_n (\frac{1}{2})$ for each $\alpha \in [0,1]$.
\label{th2}
\end{theorem}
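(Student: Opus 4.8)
The plan is to write the two ingredients of $\overline{\rho}_n$ as
\[
Q(\alpha) = \rho\bigl(B_1(\alpha)\cdots B_n(\alpha)\bigr), \qquad \tilde{Q}(\alpha) = \rho\bigl(B_n(\alpha)\cdots B_1(\alpha)\bigr),
\]
where $B_i(\alpha) := S_\alpha(K_{i1}) + \cdots + S_\alpha(K_{im})$, so that $\overline{\rho}_n(\alpha) = (Q(\alpha)\tilde{Q}(\alpha))^{1/2}$, and to establish (i) a multiplicative convexity (log-convexity) of $Q$ and $\tilde{Q}$ in $\alpha$, (ii) the adjoint symmetry $Q(\alpha)=\tilde{Q}(1-\alpha)$, and (iii) the bound (\ref{nice}). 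The engine throughout is the kernel identity
\[
S_\theta(K) = S_\alpha(K)^{(t)} \circ S_\beta(K)^{(1-t)}, \qquad \theta = t\alpha + (1-t)\beta, \quad t \in [0,1],
\]
which is read off by comparing kernels, the right-hand side having kernel $k^{t\alpha+(1-t)\beta}(x,y)\,k^{t(1-\alpha)+(1-t)(1-\beta)}(y,x) = k^{\theta}(x,y)\,k^{1-\theta}(y,x)$. I will use freely that $0 \le A \le B$ implies $\rho(A) \le \rho(B)$ for positive kernel operators on $L^2$ and that $\rho(A) = \rho(A^*)$, both recorded in Section 2 for every $\rho \in \{r, r_{ess}, \gamma, \|\cdot\|, w\}$.

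For (i) I would fix $\alpha,\beta,t$ and $\theta = t\alpha+(1-t)\beta$, sum the engine identity over $j=1,\dots,m$, and apply (\ref{mitrn}) with the exponents $t,1-t$ (whose sum equals $1\ge 1$) pointwise to the kernels, obtaining the domination
\[
B_i(\theta) = \sum_{j=1}^m S_\alpha(K_{ij})^{(t)} \circ S_\beta(K_{ij})^{(1-t)} \le B_i(\alpha)^{(t)} \circ B_i(\beta)^{(1-t)}, \qquad i=1,\dots,n.
\]
Multiplying these over $i$ (a product of positive operators respects the order) and using monotonicity of $\rho$, then feeding the outcome into Theorem \ref{thbegin} with the two ``columns'' $B_i(\alpha)$, $B_i(\beta)$ and weights $t,1-t$, gives
\[
Q(\theta) \le \rho\Bigl(\bigl(B_1(\alpha)^{(t)}\circ B_1(\beta)^{(1-t)}\bigr)\cdots\bigl(B_n(\alpha)^{(t)}\circ B_n(\beta)^{(1-t)}\bigr)\Bigr) \le Q(\alpha)^{t}\,Q(\beta)^{1-t}.
\]
The same computation in reversed order yields $\tilde{Q}(\theta) \le \tilde{Q}(\alpha)^{t}\tilde{Q}(\beta)^{1-t}$, and taking geometric means produces $\overline{\rho}_n(\theta) \le \overline{\rho}_n(\alpha)^{t}\,\overline{\rho}_n(\beta)^{1-t}$.

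For (ii) I note $S_\alpha(K)^* = S_{1-\alpha}(K)$ (its kernel is $k^{1-\alpha}(x,y)k^{\alpha}(y,x)$), so $B_i(\alpha)^* = B_i(1-\alpha)$; taking adjoints inside $\rho$ and using $\rho(A)=\rho(A^*)$ gives
\[
Q(\alpha) = \rho\bigl((B_1(\alpha)\cdots B_n(\alpha))^*\bigr) = \rho\bigl(B_n(1-\alpha)\cdots B_1(1-\alpha)\bigr) = \tilde{Q}(1-\alpha),
\]
and symmetrically $\tilde{Q}(\alpha)=Q(1-\alpha)$, whence $\overline{\rho}_n(\alpha)=\overline{\rho}_n(1-\alpha)$. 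To finish the monotonicity I would take $0\le\alpha_1<\alpha_2\le\tfrac12$, write $\alpha_2=(1-s)\alpha_1+s(1-\alpha_1)$ with $s\in[0,\tfrac12]$, and apply the multiplicative convexity of (i) at the points $\alpha_1$ and $1-\alpha_1$: this yields $\overline{\rho}_n(\alpha_2)\le\overline{\rho}_n(\alpha_1)^{1-s}\overline{\rho}_n(1-\alpha_1)^{s}=\overline{\rho}_n(\alpha_1)$ by the symmetry, so $\overline{\rho}_n$ is non-increasing on $[0,\tfrac12]$ and, by symmetry, non-decreasing on $[\tfrac12,1]$; in particular $\overline{\rho}_n(\alpha)\ge\overline{\rho}_n(\tfrac12)$.

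For the bound (\ref{nice}), set $C_i=K_{i1}+\cdots+K_{im}$. Applying (\ref{mitrn}) with exponents $\alpha,1-\alpha$ gives $B_i(\alpha)=\sum_j K_{ij}^{(\alpha)}\circ (K_{ij}^*)^{(1-\alpha)} \le C_i^{(\alpha)}\circ (C_i^*)^{(1-\alpha)} = S_\alpha(C_i)$; monotonicity of $\rho$ followed by (\ref{aufstand}) applied to $C_1,\dots,C_n$ yields $Q(\alpha) \le \rho(C_1\cdots C_n)^{\alpha}\rho(C_n\cdots C_1)^{1-\alpha}$, and in reversed order $\tilde{Q}(\alpha) \le \rho(C_n\cdots C_1)^{\alpha}\rho(C_1\cdots C_n)^{1-\alpha}$; multiplying and taking the square root, the exponents of $\rho(C_1\cdots C_n)$ and of $\rho(C_n\cdots C_1)$ each add to $1$, which is exactly (\ref{nice}). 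I expect the step requiring most care to be the repeated use of $0\le A\le B\Rightarrow\rho(A)\le\rho(B)$: for $\|\cdot\|$, $w$ and $\gamma$ it is immediate from Section 2, but for $r$ and $r_{ess}$ it genuinely depends on the positive-kernel-operator structure on $L^2$, which is what anchors the whole argument to $L=L^2(X,\mu)$; the other point to watch is the bookkeeping observation that neither $Q$ nor $\tilde{Q}$ is individually symmetric about $\tfrac12$, yet their geometric mean $\overline{\rho}_n$ is, and it is precisely this symmetry that turns the multiplicative convexity into a minimum at $\tfrac12$.
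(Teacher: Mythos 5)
Your proof is correct, and every ingredient you invoke --- the kernel identity $S_{t\alpha+(1-t)\beta}(K)=S_{\alpha}(K)^{(t)}\circ S_{\beta}(K)^{(1-t)}$, inequality (\ref{mitrn}), Theorem \ref{thbegin}, inequality (\ref{aufstand}), monotonicity of $\rho$ under $0\le A\le B$, and $\rho(A)=\rho(A^*)$ on $L^2$ --- is available in the paper for each $\rho$ in the stated list. Your derivation of (\ref{nice}) via $B_i(\alpha)\le S_{\alpha}(C_i)$ followed by (\ref{aufstand}) in both orders is exactly the paper's route (it is the ``double application'' of (\ref{expo}) and (\ref{bauernprotest}) in the proof of the set-valued generalization, Theorem \ref{th2_joint}). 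Where you genuinely diverge is the monotonicity: the paper writes $S_{\alpha_2}(K)=S_{\alpha}(S_{\alpha_1}(K))$ with $\alpha=\frac{\alpha_1+\alpha_2-1}{2\alpha_1-1}$ and then reapplies the already-proved bound (\ref{nice}) to the family $\{S_{\alpha_1}(K_{ij})\}$, checking that the forward/backward exponents $\alpha$ and $1-\alpha$ collapse; you instead prove log-convexity of $Q$ and $\tilde{Q}$ on all of $[0,1]$ and combine it with the symmetry $\overline{\rho}_n(\alpha)=\overline{\rho}_n(1-\alpha)$ coming from $S_{\alpha}(K)^*=S_{1-\alpha}(K)$. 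The two computations coincide once your convexity is specialized to the pair $(\alpha_1,1-\alpha_1)$, since $S_{1-\alpha_1}(K)=S_{\alpha_1}(K)^*$ makes $B_i(\alpha_1)^{(t)}\circ B_i(1-\alpha_1)^{(1-t)}$ precisely $S_t(B_i(\alpha_1))$; so the engine is identical, but your packaging yields a strictly stronger intermediate statement (log-convexity of $\overline{\rho}_n$, not merely the V-shape) and makes explicit why the minimum sits at $\frac12$, at the cost of one extra reduction step. One small quibble: monotonicity of $r$ under $0\le A\le B$ holds for positive operators on any Banach function space, not only for kernel operators on $L^2$; the restriction to $L^2$ is really needed for $w$, for the definition of $S_{\alpha}$ (which involves adjoints), and for $\gamma(A^*)=\gamma(A)$, rather than for that monotonicity step.
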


In  the next section we generalize the above two results to the setting of bounded sets of positive kernel operators on  $L^2(X, \mu)$ 
(Theorems  \ref{thrun} and \ref{th2_joint} below). 

\section{New results}

In this section we extend Theorems \ref{thbgd}  and \ref{th2} to the setting of bounded sets of positive kernel operators on $L^2(X, \mu)$. 

Let $\Psi$ be a bounded set of positive kernel operators on $L=L^2(X, \mu)$ and $\alpha\in [0, 1]$. Denote by $\Psi^*$ and $S_{\alpha} (\Psi)$ bounded sets of positive kernel operators on $L$ defined by $\Psi^*=\{A^* : A\in \Psi\}$ and $S_{\alpha} (\Psi)=\Psi^{(\alpha)} \circ (\Psi^*)^{(1-\alpha)}=\{A^{(\alpha)}\circ (B^*)^{(1-\alpha)} : A, B \in \Psi \}$. Note that $S_{\alpha} (\Psi)^*=S_{\alpha} (\Psi^*)=\{(A^*)^{(\alpha)}\circ B^{(1-\alpha)} : A, B \in \Psi\}$.

Let $\Psi, \Psi_1, \ldots , \Psi_n$ be bounded sets of positive kernel operators on $L=L^2(X, \mu)$, $\rho \in \{r, \hat{r}, r_{ess}, \hat{r}_{ess}\}$ and $\alpha \in [0,1]$.
In \cite[Proposition 4.2]{BP22b} the following generalizations of (\ref{aufstand}) and  (\ref{20}) were proved 
$$\rho (S_{\alpha} (\Psi_{1}) \cdots  S_{\alpha}(\Psi_{n}))\;\;\;\;\;\;\;\;\;\;\;\;\;\;\;\;\;\;\;\;\;\;\;\;\;\;\;\;\;\;\;\;\;\;\;\;\;\;\;\;\;\;\;\;\;\;\;\;\;\;\;\;\;\;\;\;\;\;\;\;\;\;\;\;\;\;\;\;$$
\be\le \rho \left((\Psi_1 \cdots \Psi_n )^{(\alpha)} \circ ((\Psi_n \cdots \Psi_1)^*)^{(1-\alpha)}  \right) \le \rho (\Psi_1 \cdots \Psi_n )^{\alpha} \, \rho (\Psi_n \cdots \Psi_1 )^{1-\alpha}
\label{bauernprotest}
\ee
and
\begin{eqnarray}
\nonumber
& & \rho (S_{\alpha}(\Psi_1)+ \cdots + S_{\alpha}(\Psi_m)) \le \rho \left(S_{\alpha}(\Psi_1+ \cdots+ \Psi_m)\right) \\
& \le & \rho (\Psi_1+ \cdots+ \Psi_m).
\label{widerstand}
\end{eqnarray}
In particular (\cite[Proposition 4.2 (58)]{BP22b}),
\be
\label{eden}
\rho (S_{\alpha} (\Psi) ) \le \rho (\Psi ).
\ee

Let us denote $\|\Psi\|=\sup _{A\in\Psi} \|A\|$, $\gamma (\Psi)=\sup _{A\in\Psi} \gamma (A)$ and  $w (\Psi)=\sup _{A\in\Psi} w(A)$. 

We show below in Corollary \ref{geom_sym_nwg} that (\ref{bauernprotest}) and (\ref{widerstand}) hold also for all $\rho \in \{\|\cdot\|, \gamma, w \}$. To do this we first point out the following analogue of Theorem \ref{finally_1} 
 for all $\rho \in \{\|\cdot\|, \gamma, w \}$ that easily follows from Theorem \ref{thbegin} and (\ref{mitr2}).

\begin{corollary} \label{finally_nwg}
Let $\{\Psi _{i j}\}_{i=1, j=1}^{k, m}$ be bounded sets of positive kernel operators on a Banach function space $L$ and assume that
 $\alpha _1, \ldots , \alpha _m$ are positive numbers such that  $\sum _{i=1} ^m \alpha _i = 1$. 
Then (\ref{lepa_nwg}) and  (\ref{lepa2_nwg}) hold for $\rho =\|\cdot\|$.

If, in addition, $L$ and $L^*$ have order continuous norms, then (\ref{lepa_nwg}) and (\ref{lepa2_nwg}) hold also for $\rho=\gamma$.

If, in addition, $L=L^2(X, \mu)$, then (\ref{lepa_nwg}) and (\ref{lepa2_nwg}) hold also for $\rho=w$.

\end{corollary}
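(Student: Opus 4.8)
The plan is to reduce everything to the element-wise inequalities already recorded in Theorems \ref{thbegin} and \ref{special_case}, exploiting that for $\rho \in \{\|\cdot\|, \gamma, w\}$ the set functional is simply the supremum $\rho(\Psi)=\sup_{A\in\Psi}\rho(A)$ and that each of these three functionals is monotone with respect to the operator order on positive operators: for $\|\cdot\|$ this follows from (\ref{equiv_op}) and the defining property of a Banach function space, while for $\gamma$ and $w$ the monotonicity $0\le A\le B \Rightarrow \rho(A)\le\rho(B)$ was recorded in Section 2 (for $\gamma$ under the hypothesis that $L$ and $L^*$ have order continuous norms, for $w$ on $L=L^2(X,\mu)$). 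In this way the whole argument parallels the proof of Theorem \ref{finally_1}, only with the limit-type radii $r,\hat r$ replaced by these more elementary suprema, which is precisely why the statement is a corollary.

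For the multiplicative bound (\ref{lepa_nwg}), first I would fix arbitrary representatives $K_{ij}\in\Psi_{ij}$. The first inequality comes from the operator domination (\ref{norm2}) of Theorem \ref{thbegin}: since
\[
\left(K_{11}^{(\alpha_1)} \circ \cdots \circ K_{1m}^{(\alpha_m)}\right) \cdots \left(K_{k1}^{(\alpha_1)} \circ \cdots \circ K_{km}^{(\alpha_m)}\right) \le (K_{11} \cdots K_{k1})^{(\alpha_1)} \circ \cdots \circ (K_{1m} \cdots K_{km})^{(\alpha_m)}
\]
and $\rho$ is monotone, each element of the left-hand set is dominated in value by an element of the middle set, so the suprema inherit the first $\le$. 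The second inequality is obtained by applying the upper estimate already present in (\ref{spectral2}) for $\|\cdot\|$, in (\ref{meas_noncomp}) for $\gamma$, and in (\ref{num_rad}) for $w$ to a generic middle element, and then bounding each factor by its set supremum via $\rho(K_{1j}\cdots K_{kj})\le\rho(\Psi_{1j}\cdots\Psi_{kj})$; since the resulting bound no longer depends on the $K_{ij}$, taking the supremum over the middle set gives the second $\le$.

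The additive bound (\ref{lepa2_nwg}) needs one extra ingredient, because Theorem \ref{thbegin} is stated only for products. The second inequality is still immediate: writing $H_j=K_{1j}+\cdots+K_{kj}$ and invoking Theorem \ref{special_case} — namely (\ref{gl1nrm}) for $\|\cdot\|$, (\ref{gl1meas_nonc}) for $\gamma$, and the corresponding numerical-radius inequality for $w$ — gives $\rho(H_1^{(\alpha_1)}\circ\cdots\circ H_m^{(\alpha_m)})\le\rho(H_1)^{\alpha_1}\cdots\rho(H_m)^{\alpha_m}$, after which $\rho(H_j)\le\rho(\Psi_{1j}+\cdots+\Psi_{kj})$ and a supremum finish it. For the first inequality the plan is to establish the operator domination directly from (\ref{mitr2}): applied pointwise to the kernels $k_{ij}(x,y)$ of the $K_{ij}$ with $\sum_j\alpha_j=1$, inequality (\ref{mitr2}) yields
\[
\left(K_{11}^{(\alpha_1)} \circ \cdots\right) + \cdots + \left(K_{k1}^{(\alpha_1)} \circ \cdots\right) \le (K_{11} + \cdots + K_{k1})^{(\alpha_1)} \circ \cdots \circ (K_{1m} + \cdots + K_{km})^{(\alpha_m)},
\]
whence monotonicity of $\rho$ and passage to suprema give the first $\le$.

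I expect no essential obstacle here — this is exactly why the result is a corollary rather than a theorem. The only points requiring attention are bookkeeping ones: keeping the direction of the supremum manipulations correct (domination of each element of a smaller set by an element of a larger set transfers to the suprema, since the bounds become uniform once each factor is replaced by its set supremum), and noticing that, for the additive case, the first operator inequality is not covered by Theorem \ref{thbegin} and must instead be supplied by (\ref{mitr2}). With the monotonicity of $\|\cdot\|$, $\gamma$ and $w$ in hand, the remainder is a direct transcription of the element-wise inequalities of Theorems \ref{thbegin} and \ref{special_case}.
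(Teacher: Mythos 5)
Your proposal is correct and follows exactly the paper's (very terse) proof: inequalities (\ref{lepa_nwg}) from Theorem \ref{thbegin}, and (\ref{lepa2_nwg}) from (\ref{mitr2}), monotonicity of $\rho$, and Theorem \ref{special_case}, with the supremum definitions $\rho(\Psi)=\sup_{A\in\Psi}\rho(A)$ transferring the element-wise bounds to the sets. Your write-up simply supplies the bookkeeping the paper leaves implicit.
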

\begin{proof} Inequalities (\ref{lepa_nwg}) in all three cases follow from Theorem \ref{thbegin}, while inequalities (\ref{lepa2_nwg}) follow from  (\ref{mitr2}), monotonicity of $\rho$ and from Theorem \ref{special_case}.
\end{proof}
\begin{remark}{\rm Let $\mathcal{L}$ denote the set of all Banach sequence spaces $L$ such that all standard vectors are included in $L$ and have norm $1$ (for precise definitions see e.g. \cite{BP22b}, \cite{LP24}, \cite{P21}). If $\alpha _1, \ldots , \alpha _m$ are positive numbers such that  
$\sum _{i=1} ^m \alpha _i \ge 1$, then  for $\rho =\|\cdot\|$ inequalities (\ref{lepa_nwg}) and  (\ref{lepa2_nwg}) hold also for bounded sets of nonnegative matrices that define positive operators on $L\in  \mathcal{L}$ by \cite[Theorem 2.1 (ii)]{LP24}. If, in addition,  $L$ and $L^*$ have order continuous norms then the same conclusion holds also for $\rho = \gamma $ by \cite[Theorem 3.5]{LP24}.  However,  if in addition, $L=l^2$, then the same conclusion does not hold for $\rho =w$ (see e.g. \cite[Example 3.2]{DP05}).
}
\end{remark}

The following result establishes an analogue of (\ref{bauernprotest}) and (\ref{widerstand}) for all $\rho \in \{\|\cdot\|, \gamma, w \}$ and it  extends  \cite[Proposition 2.2]{BP21}.
\begin{corollary}
Let $\Psi_1, \ldots ,\Psi_m$ be bounded sets of positive kernel operators on $L^2(X,\mu)$ and let $\alpha \in [0,1]$. 
Then (\ref{bauernprotest}) and (\ref{widerstand}) hold  for all $\rho \in \{\|\cdot\|, \gamma, w \}$. 
%
\label {geom_sym_nwg}
\end{corollary}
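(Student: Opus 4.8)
The plan is to read off both chains from Corollary~\ref{finally_nwg}, specialized to the two weights $\alpha_1=\alpha$, $\alpha_2=1-\alpha$ and to the two families $\Psi_{i1}=\Psi_i$, $\Psi_{i2}=\Psi_i^*$, combined with the fact that on $L^2(X,\mu)$ each $\rho\in\{\|\cdot\|,\gamma,w\}$ is invariant under passing to the adjoint. I would first assume $\alpha\in(0,1)$, so that both weights are positive as Corollary~\ref{finally_nwg} requires, and dispose of the endpoints afterwards. By the very definition $S_\alpha(\Phi)=\Phi^{(\alpha)}\circ(\Phi^*)^{(1-\alpha)}$, the Hadamard mean $\Psi_{i1}^{(\alpha_1)}\circ\Psi_{i2}^{(\alpha_2)}$ is exactly $S_\alpha(\Psi_i)$; hence the left-hand sides of (\ref{lepa_nwg}) and (\ref{lepa2_nwg}) become $\rho(S_\alpha(\Psi_1)\cdots S_\alpha(\Psi_n))$ and $\rho(S_\alpha(\Psi_1)+\cdots+S_\alpha(\Psi_m))$, which are precisely the quantities on the left of (\ref{bauernprotest}) and (\ref{widerstand}).

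Next I would record the identities that put the right-hand sides into the claimed shape. From $(A_1\cdots A_n)^*=A_n^*\cdots A_1^*$ and $(A+B)^*=A^*+B^*$, relabelling the operators that range over the sets gives $\Psi_1^*\cdots\Psi_n^*=(\Psi_n\cdots\Psi_1)^*$ and $\Psi_1^*+\cdots+\Psi_m^*=(\Psi_1+\cdots+\Psi_m)^*$. Moreover, for every positive kernel operator $A$ on $L^2(X,\mu)$ one has $\|A^*\|=\|A\|$, $\gamma(A^*)=\gamma(A)$ (recalled in Section 2), and $w(A^*)=w(A)$ (because $\langle A^*f,f\rangle=\overline{\langle Af,f\rangle}$); taking suprema over a bounded set $\Phi$ yields $\rho(\Phi^*)=\rho(\Phi)$ for all three choices of $\rho$.

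With these facts in place the derivation is pure bookkeeping. Chain (\ref{lepa_nwg}) under the above substitution reads
\[
\rho(S_\alpha(\Psi_1)\cdots S_\alpha(\Psi_n))\le\rho\big((\Psi_1\cdots\Psi_n)^{(\alpha)}\circ(\Psi_1^*\cdots\Psi_n^*)^{(1-\alpha)}\big)\le\rho(\Psi_1\cdots\Psi_n)^{\alpha}\,\rho(\Psi_1^*\cdots\Psi_n^*)^{1-\alpha},
\]
and substituting $\Psi_1^*\cdots\Psi_n^*=(\Psi_n\cdots\Psi_1)^*$ and then $\rho((\Psi_n\cdots\Psi_1)^*)=\rho(\Psi_n\cdots\Psi_1)$ turns this into (\ref{bauernprotest}). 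Similarly, chain (\ref{lepa2_nwg}) becomes
\[
\rho\big(\textstyle\sum_i S_\alpha(\Psi_i)\big)\le\rho\big((\textstyle\sum_i\Psi_i)^{(\alpha)}\circ((\textstyle\sum_i\Psi_i)^*)^{(1-\alpha)}\big)\le\rho\big(\textstyle\sum_i\Psi_i\big)^{\alpha}\rho\big(\textstyle\sum_i\Psi_i\big)^{1-\alpha},
\]
where I used $\sum_i\Psi_i^*=(\sum_i\Psi_i)^*$ and $\rho((\sum_i\Psi_i)^*)=\rho(\sum_i\Psi_i)$; the middle term is $\rho(S_\alpha(\Psi_1+\cdots+\Psi_m))$ and the right-hand side is $\rho(\Psi_1+\cdots+\Psi_m)$, so this is exactly (\ref{widerstand}).

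It remains to treat the endpoints. For $\alpha=1$ one has $S_1(\Psi_i)=\Psi_i$ and the Hadamard factor carrying exponent $0$ drops out, so both chains collapse to trivial identities; for $\alpha=0$ one has $S_0(\Psi_i)=\Psi_i^*$ and everything reduces to the adjoint invariance recorded above. Alternatively, the case split can be avoided entirely by reducing (\ref{bauernprotest}) element-wise to Theorem~\ref{thbegin} (with its two columns $A_i$ and $C_i^*$), which already permits a vanishing weight, and then taking suprema. I do not anticipate a genuine obstacle: the whole argument is a reduction to Corollary~\ref{finally_nwg}. The one necessary extra input --- and, for the $\gamma$ case, the only place where $L=L^2(X,\mu)$ is needed rather than merely the order-continuity of $L$ and $L^*$ --- is the adjoint invariance $\rho(\Phi^*)=\rho(\Phi)$, in particular $\gamma(A^*)=\gamma(A)$.
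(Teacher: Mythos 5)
Your proposal is correct and follows essentially the same route as the paper: the paper likewise applies Corollary~\ref{finally_nwg} to the column sets $\Psi_i$ and $\Psi_i^*$ with weights $\alpha$ and $1-\alpha$ and then invokes $\rho(\Psi^*)=\rho(\Psi)$ to rewrite the right-hand side. Your explicit treatment of the endpoints $\alpha\in\{0,1\}$ (where one weight vanishes) is a small point the paper's proof passes over silently, but otherwise the two arguments coincide.
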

\begin{proof}
Let $\rho \in \{\|\cdot\|, \gamma, w \}$. 
By Corollary \ref{finally_nwg} 
we have
\begin{eqnarray}
\nonumber
& & \rho \left(S_{\alpha} (\Psi_{1}) \cdots  S_{\alpha}(\Psi_{m})\right) =\rho\left( (\Psi_1 ^{(\alpha)} \circ (\Psi_1^*)^{(1-\alpha)}) \cdots
\left(\Psi_m ^{(\alpha)} \circ (\Psi_m^*)^{(1-\alpha)}\right)\right) \\
\nonumber
&\le &  \rho\left((\Psi_1 \cdots \Psi_m )^{(\alpha)} \circ ((\Psi_m \cdots \Psi_1)^*)^{(1-\alpha)}  \right)\\
\nonumber
& \le&  \rho(\Psi_1 \cdots \Psi_m)^{\alpha} \,  \rho((\Psi_m \cdots \Psi_1)^*)^{1-\alpha} =
 \rho(\Psi_1 \cdots \Psi_m )^{\alpha} \,  \rho(\Psi_m \cdots \Psi_1 )^{1-\alpha},
 \end{eqnarray}
  where the last equality follows from  the fact that $\rho(\Psi)=\rho(\Psi^*)$. This completes the proof of 
(\ref{bauernprotest}) for $\rho \in \{\|\cdot\|, \gamma, w \}$.  
  The inequalities in (\ref{widerstand}) are proved in similar way by applying (\ref{lepa2_nwg}). 
\end{proof}

The following results generalizes Theorem \ref{thbgd}.
\begin{theorem}
\label{thrun}
Let $\Psi_1, \ldots , \Psi_n$ be bounded sets of positive kernel operators on $L=L^2(X, \mu)$ and $\rho \in \{r, \hat{r}, r_{ess}, \hat{r}_{ess}, \|\cdot\|, \gamma, w \}$. 
Define $\rho_n: [0,1] \to [0,\infty)$ by
$$\rho_n (\alpha)=\sqrt{\rho(S_{\alpha} (\Psi_1)S_{\alpha} (\Psi_2)\cdots S_{\alpha} (\Psi_n))\rho(S_{\alpha} (\Psi_n)S_{\alpha} (\Psi_{n-1})\cdots S_{\alpha} (\Psi_1))}.$$
Then $\rho_n$ is decreasing on $[0, \frac{1}{2}]$ and increasing on $[\frac{1}{2}, 1]$. 

In particular, $\rho_n(\alpha) \ge \rho_n (\frac{1}{2})$ for each $\alpha \in [0,1]$.
\end{theorem}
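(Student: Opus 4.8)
The plan is to reduce everything to the master inequality of Theorem \ref{finally_1} (and its extension Corollary \ref{finally_nwg} to $\rho\in\{\|\cdot\|,\gamma,w\}$), combined with two structural facts about the sets $S_\alpha(\Psi)$: a reflection symmetry of $\rho_n$ about $\frac12$, and a ``diagonal inclusion'' that exhibits $S_\beta$ as a weighted Hadamard geometric mean of $S_\alpha$ and $S_{1-\alpha}$. Granting the symmetry, it suffices to prove that $\rho_n$ is decreasing on $[0,\frac12]$; the increasing behaviour on $[\frac12,1]$ then follows, and the final assertion $\rho_n(\alpha)\ge\rho_n(\frac12)$ is immediate since $\frac12$ becomes the global minimum.

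First I would record the symmetry $\rho_n(1-\alpha)=\rho_n(\alpha)$. Writing out kernels, one checks $S_{1-\alpha}(\Psi_i)=S_\alpha(\Psi_i)^*$ (already noted for the sets $S_\alpha(\Psi)$ in Section 3). Since for bounded sets of operators $(\Theta_1\cdots\Theta_n)^*=\Theta_n^*\cdots\Theta_1^*$, and since $\rho(\Theta)=\rho(\Theta^*)$ for every $\rho$ in our list (exactly the fact used in the proof of Corollary \ref{geom_sym_nwg}), I obtain
\[
\rho(S_{1-\alpha}(\Psi_1)\cdots S_{1-\alpha}(\Psi_n)) = \rho(S_\alpha(\Psi_n)\cdots S_\alpha(\Psi_1)),
\]
together with the companion identity in which the two orders are interchanged. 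Substituting both into the definition of $\rho_n$ gives $\rho_n(1-\alpha)=\rho_n(\alpha)$.

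Now fix $0\le\alpha<\beta\le\frac12$ (the case $\alpha=\beta$ being trivial). Then $\beta\in[\alpha,1-\alpha]$, so $\beta=t\alpha+(1-t)(1-\alpha)$ for a unique $t$, and a short computation shows $t\in[\frac12,1)$, so both weights $t$ and $1-t$ are positive. The heart of the argument is the inclusion
\[
S_\beta(\Psi_i)\subseteq S_\alpha(\Psi_i)^{(t)}\circ S_{1-\alpha}(\Psi_i)^{(1-t)},\qquad i=1,\dots,n,
\]
which I verify at the level of kernels: the element $A^{(\beta)}\circ(B^*)^{(1-\beta)}$ of $S_\beta(\Psi_i)$, with kernel $a(x,y)^\beta b(y,x)^{1-\beta}$, equals $C^{(t)}\circ D^{(1-t)}$ for $C=A^{(\alpha)}\circ(B^*)^{(1-\alpha)}\in S_\alpha(\Psi_i)$ and $D=A^{(1-\alpha)}\circ(B^*)^{(\alpha)}\in S_{1-\alpha}(\Psi_i)$, the exponent bookkeeping being precisely $\alpha t+(1-\alpha)(1-t)=\beta$. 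Set products preserve inclusions, and each $\rho$ in our list is monotone under set inclusion (clear from the definitions (\ref{genrho})--(\ref{jointess}) and from $\|\Psi\|,\gamma(\Psi),w(\Psi)$ being suprema over $\Psi$). Hence $\rho(S_\beta(\Psi_1)\cdots S_\beta(\Psi_n))$ is dominated by $\rho$ of the corresponding product of weighted Hadamard geometric means, and Theorem \ref{finally_1}/Corollary \ref{finally_nwg} (with $m=2$, weights $t,1-t$, and $k=n$ factors) together with the symmetry step yields
\[
\rho(S_\beta(\Psi_1)\cdots S_\beta(\Psi_n)) \le P^{t}\,Q^{1-t},\qquad \rho(S_\beta(\Psi_n)\cdots S_\beta(\Psi_1)) \le Q^{t}\,P^{1-t},
\]
where $P=\rho(S_\alpha(\Psi_1)\cdots S_\alpha(\Psi_n))$ and $Q=\rho(S_\alpha(\Psi_n)\cdots S_\alpha(\Psi_1))$ (the second bound follows by relabelling the factors of the reversed product). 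Multiplying the two estimates, the exponents of $P$ and of $Q$ each sum to $1$, so $\rho_n(\beta)^2\le PQ=\rho_n(\alpha)^2$, i.e. $\rho_n(\beta)\le\rho_n(\alpha)$.

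The step I expect to demand the most care is precisely this inclusion together with the accompanying adjoint identity. Unlike the single-operator case of Theorem \ref{thbgd}, where $S_\alpha(K)$ is built from one operator and one has the genuine equality $S_\beta(K)=S_t(S_\alpha(K))$, here $S_\alpha(\Psi)$ involves two independent operators $A,B$, so at the set level one obtains only an inclusion, corresponding to the ``diagonal'' choice of the same pair $A,B$ inside the weighted mean. Monotonicity of $\rho$ under set inclusion is exactly what makes this inclusion suffice, and correctly identifying the $S_{1-\alpha}$-products with order-reversed $S_\alpha$-products (via $S_{1-\alpha}(\Psi_i)=S_\alpha(\Psi_i)^*$ and $\rho(\Theta)=\rho(\Theta^*)$) is the other point where the bookkeeping must be executed carefully. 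Everything quantitative is then supplied by Theorem \ref{finally_1}, which already packages the required $m=2$ weighted-geometric-mean inequality uniformly for all seven choices of $\rho$.
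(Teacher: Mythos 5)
Your proof is correct and is essentially the paper's own argument: your inclusion $S_\beta(\Psi_i)\subseteq S_\alpha(\Psi_i)^{(t)}\circ S_{1-\alpha}(\Psi_i)^{(1-t)}$ with $t=\frac{1-\alpha-\beta}{1-2\alpha}$ is, after unwinding $S_{1-\alpha}(\Psi_i)=S_\alpha(\Psi_i)^*$, exactly the paper's inclusion $S_{\alpha_2}(\Psi)\subset S_{\alpha}(S_{\alpha_1}(\Psi))$ with the same weight, and the quantitative step via Theorem \ref{finally_1} together with $\rho(\Theta^*)=\rho(\Theta)$ is precisely inequality (\ref{bauernprotest}). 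The only (harmless) organizational difference is that you deduce the increasing behaviour on $[\frac{1}{2},1]$ from the reflection symmetry $\rho_n(1-\alpha)=\rho_n(\alpha)$, whereas the paper runs the mirror-image argument a second time.
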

\begin{proof}
Let $0\le\alpha_1<\alpha_2\le \frac{1}{2}$ and $\alpha=\frac{\alpha_1+\alpha_2-1}{2\alpha_1-1}$. Then $\alpha \in (0, 1)$ and 
\be
S_{\alpha_2} (\Psi)\subset S_{\alpha}(S_{\alpha_1} (\Psi))
\label{inclusion*}
\ee
 for every bounded set of positive kernel operators on $L=L^2(X, \mu)$. Indeed, for $A^{(\alpha_2)}\circ (B^*)^{(1-\alpha_2)}\in S_{\alpha_2} (\Psi)$ where $A, B \in \Psi$ we have
$$A^{(\alpha_2)}\circ (B^*)^{(1-\alpha_2)}=A^{(\alpha\alpha_1+(1-\alpha_1)(1-\alpha))}\circ (B^*)^{(\alpha(1-\alpha_1)+\alpha_1(1-\alpha))}$$
$$=(A^{(\alpha_1)}\circ (B^*)^{(1-\alpha_1)})^{(\alpha)}\circ ((B^*)^{(\alpha_1)}\circ A^{(1-\alpha_1)})^{(1-\alpha)}\in S_{\alpha}(S_{\alpha_1} (\Psi)),$$ 
since $A^{(\alpha_1)}\circ (B^*)^{(1-\alpha_1)}\in S_{\alpha_1} (\Psi)$ and $(B^*)^{(\alpha_1)}\circ A^{(1-\alpha_1)}\in S_{\alpha_1} (\Psi^*)=S_{\alpha_1} (\Psi)^*$.
From (\ref{inclusion*}) and \eqref{bauernprotest} it follows that
$$\rho_n (\alpha_2)=\sqrt{\rho(S_{\alpha_2} (\Psi_1)
	\cdots S_{\alpha_2} (\Psi_n))\rho(S_{\alpha_2} (\Psi_n)
	\cdots S_{\alpha_2} (\Psi_1))}$$
$$\le\sqrt{\rho(S_{\alpha}(S_{\alpha_1} (\Psi_1))
	\cdots S_{\alpha}(S_{\alpha_1} (\Psi_n)))\rho(S_{\alpha}(S_{\alpha_1} (\Psi_n))
	\cdots S_{\alpha}(S_{\alpha_1} (\Psi_1)))}$$
$$\le\sqrt{\rho{((S_{\alpha_1}(\Psi_1)\cdots S_{\alpha_1}(\Psi_n))^{\alpha}}\rho{((S_{\alpha_1}(\Psi_n)\cdots S_{\alpha_1}(\Psi_1))^{1-\alpha}}}\times$$
	$$\sqrt{\rho{((S_{\alpha_1}(\Psi_n)\cdots S_{\alpha_1}(\Psi_1))^{\alpha}}\rho{((S_{\alpha_1}(\Psi_1)\cdots S_{\alpha_1}(\Psi_n))^{1-\alpha}}}$$
$$=\sqrt{\rho(S_{\alpha_1} (\Psi_1)
	\cdots S_{\alpha_1} (\Psi_n))\rho(S_{\alpha_1} (\Psi_n)
	\cdots S_{\alpha_1} (\Psi_1))}=\rho_n(\alpha_1),$$
which proves that $\rho_n$ is a decreasing function on $[0, \frac{1}{2}]$. 

To prove the case $\frac{1}{2}\le\alpha_1<\alpha_2\le 1$, let $\alpha=\frac{\alpha_1+\alpha_2-1}{2\alpha_2-1}$. Then $\alpha\in (0, 1)$ and 
 \be
 S_{\alpha_1} (\Psi)\subset S_{\alpha}(S_{\alpha_2} (\Psi))
 \label{inclusion*2}
 \ee
 for every bounded set $\Psi$ of positive kernel operators on $L=L^2(X, \mu)$ . Indeed, let $C^{(\alpha_1)}\circ (D^*)^{(1-\alpha_1)}\in S_{\alpha_1} (\Psi)$, where $C, D \in\Psi$. Then
$$C^{(\alpha_1)}\circ (D^*)^{(1-\alpha_1)}=C^{(\alpha\alpha_2+(1-\alpha_2)(1-\alpha))}\circ (D^*)^{(\alpha(1-\alpha_2)+\alpha_2(1-\alpha))}$$
$$=(C^{(\alpha_2)}\circ (D^*)^{(1-\alpha_2)})^{(\alpha)}\circ ((D^*)^{(\alpha_2)}\circ C^{(1-\alpha_2)})^{(1-\alpha)}\in S_{\alpha}(S_{\alpha_2} (\Psi)),$$ 
since $C^{(\alpha_2)}\circ (D^*)^{(1-\alpha_2)}\in S_{\alpha_2} (\Psi)$ and $(D^*)^{(\alpha_2)}\circ C^{(1-\alpha_2)}\in S_{\alpha_2} (\Psi^*)=S_{\alpha_2} (\Psi)^*$. Applying (\ref{inclusion*2}) and \eqref{bauernprotest} we obtain
$$\rho_n (\alpha_1)=\sqrt{\rho(S_{\alpha_1} (\Psi_1)
	\cdots S_{\alpha_1} (\Psi_n))\rho(S_{\alpha_1} (\Psi_n)
	\cdots S_{\alpha_1} (\Psi_1))}$$
$$\le\sqrt{\rho(S_{\alpha}(S_{\alpha_2} (\Psi_1))
	\cdots S_{\alpha}(S_{\alpha_2} (\Psi_n)))\rho(S_{\alpha}(S_{\alpha_2} (\Psi_n))
	\cdots S_{\alpha}(S_{\alpha_2} (\Psi_1)))}$$
$$\le\sqrt{\rho{((S_{\alpha_2}(\Psi_1)\cdots S_{\alpha_2}(\Psi_n))^{\alpha}}\rho{((S_{\alpha_2}(\Psi_n)\cdots S_{\alpha_2}(\Psi_1))^{1-\alpha}}}\times$$
$$\sqrt{\rho{((S_{\alpha_2}(\Psi_n)\cdots S_{\alpha_2}(\Psi_1))^{\alpha}}\rho{((S_{\alpha_2}(\Psi_1)\cdots S_{\alpha_2}(\Psi_n))^{1-\alpha}}}$$
$$=\sqrt{\rho(S_{\alpha_2} (\Psi_1)
	\cdots S_{\alpha_2} (\Psi_n))\rho(S_{\alpha_2} (\Psi_n)
	\cdots S_{\alpha_2} (\Psi_1))}=\rho_n(\alpha_2),$$
which completes the proof.
\end{proof}
The following corollary generalizes \cite[Theorem 2.7]{BP21} (\cite[Corollary 3.2]{BP22b}).
\begin{corollary} Let $\Psi$ be a bounded set of positive kernel operators on $L=L^2(X, \mu)$ and $\rho \in \{r, \hat{r}, r_{ess}, \hat{r}_{ess}, \|\cdot\|, \gamma, w \}$. 
Then the function  $\rho_1: [0,1] \to [0,\infty)$, defined by
$\rho_1 (\alpha)=\rho(S_{\alpha}(\Psi))$, is decreasing on $[0, \frac{1}{2}]$ and increasing on $[\frac{1}{2}, 1]$. 

In particular, $\rho(S_{\alpha}(\Psi)) \ge \rho(S(\Psi))$ for each $\alpha \in [0,1]$.
\label{n01}
\end{corollary}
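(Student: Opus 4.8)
The plan is to recognize Corollary~\ref{n01} as nothing more than the special case $n=1$ of Theorem~\ref{thrun}, so that the entire statement follows by specialization without any new work.

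First I would substitute $n=1$ and $\Psi_1=\Psi$ into the definition of $\rho_n$ in Theorem~\ref{thrun}. With only one factor, both products under the square root reduce to $S_\alpha(\Psi)$, so that
\[
\rho_1(\alpha)=\sqrt{\rho(S_\alpha(\Psi))\,\rho(S_\alpha(\Psi))}=\rho(S_\alpha(\Psi)),
\]
which is exactly the function whose monotonicity is asserted in the corollary. Consequently the conclusion of Theorem~\ref{thrun}, namely that $\rho_n$ is decreasing on $[0,\tfrac12]$ and increasing on $[\tfrac12,1]$ for every $\rho\in\{r,\hat r,r_{ess},\hat r_{ess},\|\cdot\|,\gamma,w\}$, transfers verbatim to the map $\alpha\mapsto\rho(S_\alpha(\Psi))$.

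For the concluding inequality I would use that a function decreasing on $[0,\tfrac12]$ and increasing on $[\tfrac12,1]$ attains its minimum on $[0,1]$ at $\alpha=\tfrac12$; since $S(\Psi)=S_{1/2}(\Psi)$, this yields $\rho(S_\alpha(\Psi))\ge\rho(S(\Psi))$ for every $\alpha\in[0,1]$. There is essentially no obstacle to overcome, as all of the substance is already contained in Theorem~\ref{thrun}. If one prefers to run the argument directly rather than quote the theorem, the only ingredient needed in the $n=1$ case is the single-factor instance of \eqref{bauernprotest}, which is precisely \eqref{eden} and holds for all seven choices of $\rho$ (via Corollary~\ref{geom_sym_nwg} for $\|\cdot\|,\gamma,w$ and via \cite{BP22b} for the four spectral-radius-type quantities), combined with the inclusions $S_{\alpha_2}(\Psi)\subset S_\alpha(S_{\alpha_1}(\Psi))$ and $S_{\alpha_1}(\Psi)\subset S_\alpha(S_{\alpha_2}(\Psi))$ and the monotonicity of $\rho$ under set inclusion.
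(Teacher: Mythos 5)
Your proposal is correct and matches the paper's approach: the corollary is stated immediately after Theorem \ref{thrun} with no separate proof precisely because it is the case $n=1$, where $\rho_1(\alpha)=\sqrt{\rho(S_\alpha(\Psi))\,\rho(S_\alpha(\Psi))}=\rho(S_\alpha(\Psi))$, and the final inequality follows since a function decreasing on $[0,\tfrac12]$ and increasing on $[\tfrac12,1]$ is minimized at $\alpha=\tfrac12$ with $S(\Psi)=S_{1/2}(\Psi)$.
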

The following corollary generalizes \cite[Corollary 3.3]{BP24a}.
\begin{corollary}
Let $\Psi_1$ and $\Psi_2$ be bounded sets of positive kernel operators on $L=L^2(X, \mu)$ and let $\rho \in\{r, \hat{r}, r_{ess}, \hat{r}_{ess}\}$. Then the function $\rho_2:[0,1] \to [0, \infty)$, defined by $\rho_2(\alpha)=\rho(S_{\alpha}(\Psi_1)S_{\alpha}(\Psi_2))$, is decreasing on $[0, \frac{1}{2}]$ and increasing on $[\frac{1}{2}, 1]$.

In particular, $\rho(S_{\alpha}(\Psi _1)S_{\alpha}(\Psi _2)) \ge \rho(S(\Psi_1)S(\Psi_2))$ for each $\alpha \in [0,1]$.
\end{corollary}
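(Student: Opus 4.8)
The plan is to obtain this corollary directly from Theorem \ref{thrun} in the special case $n=2$, combined with the cyclicity property of the spectral-radius-type functionals recorded in (\ref{again}). With $n=2$, Theorem \ref{thrun} asserts the claimed monotonicity for the function
$$\alpha \mapsto \sqrt{\rho(S_{\alpha}(\Psi_1)S_{\alpha}(\Psi_2))\,\rho(S_{\alpha}(\Psi_2)S_{\alpha}(\Psi_1))},$$
so the entire task reduces to checking that this function coincides with the $\rho_2$ defined in the statement, i.e. that the two factors under the square root are equal.

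This is precisely where (\ref{again}) enters: for every $\rho \in \{r, \hat{r}, r_{ess}, \hat{r}_{ess}\}$ and any bounded sets $\Psi, \Sigma$ of bounded operators one has $\rho(\Psi\Sigma)=\rho(\Sigma\Psi)$. Taking $\Psi=S_{\alpha}(\Psi_1)$ and $\Sigma=S_{\alpha}(\Psi_2)$ yields $\rho(S_{\alpha}(\Psi_2)S_{\alpha}(\Psi_1))=\rho(S_{\alpha}(\Psi_1)S_{\alpha}(\Psi_2))$, so the square root collapses to $\rho(S_{\alpha}(\Psi_1)S_{\alpha}(\Psi_2))=\rho_2(\alpha)$. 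Thus $\rho_2$ inherits the monotonicity of Theorem \ref{thrun} verbatim, being decreasing on $[0,\frac{1}{2}]$ and increasing on $[\frac{1}{2},1]$, and the final ``in particular'' clause follows by evaluating at $\alpha=\frac{1}{2}$, where $S_{1/2}(\Psi_i)=S(\Psi_i)$ and $\frac{1}{2}$ is the minimizer.

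I would flag only one point, which is a matter of care rather than a genuine obstacle: the restriction $\rho \in \{r, \hat{r}, r_{ess}, \hat{r}_{ess}\}$ is exactly what makes the cyclicity (\ref{again}) available, and it is indispensable to the argument. For $\rho \in \{\|\cdot\|, \gamma, w\}$ the identity $\rho(\Psi\Sigma)=\rho(\Sigma\Psi)$ can fail, so the square root in Theorem \ref{thrun} cannot be dropped and the corollary as stated does not extend to those three functionals. This explains why the statement lists only the four spectral-radius variants, even though Theorem \ref{thrun} itself covers all seven.
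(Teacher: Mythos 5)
Your proposal is correct and is precisely the intended deduction: the paper states this corollary without proof as an immediate consequence of Theorem \ref{thrun} with $n=2$, using the commutativity property $\rho(\Psi\Sigma)=\rho(\Sigma\Psi)$ from (\ref{again}) to collapse the square root. Your remark on why the list of functionals is restricted to $\{r,\hat{r},r_{ess},\hat{r}_{ess}\}$ correctly identifies the role of that property.
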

%

Next we extend \cite[Proposition 3.4]{BP24a}.

\begin{proposition}
Let $\Psi_1, \ldots , \Psi_n$ be bounded sets of positive kernel operators on $L=L^2(X, \mu)$ and $\rho \in \{r, \hat{r}, r_{ess}, \hat{r}_{ess}, \|\cdot\|, \gamma, w \}$. Then the function $\widetilde{\rho_n}:[0,1] \to [0, \infty)$, defined by $ \widetilde{\rho_n}  (\alpha)=\rho(S_{\alpha}(\Psi_1)+\cdots +S_{\alpha}(\Psi_n))$, is decreasing on $[0, \frac {1}{2}]$ and increasing on $[\frac {1}{2}, 1]$.
\end{proposition}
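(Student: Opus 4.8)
The plan is to mimic the proof of Theorem~\ref{thrun}, replacing the multiplicative inequality \eqref{bauernprotest} with the additive inequality \eqref{widerstand}. The core structural fact is the set-theoretic inclusion \eqref{inclusion*}, which already holds for any single bounded set $\Psi$ and does not involve products or sums; hence it applies verbatim to each $\Psi_i$ here. First I would treat the interval $[0,\tfrac{1}{2}]$: for $0\le\alpha_1<\alpha_2\le\tfrac{1}{2}$ set $\alpha=\frac{\alpha_1+\alpha_2-1}{2\alpha_1-1}\in(0,1)$, so that $S_{\alpha_2}(\Psi_i)\subset S_{\alpha}(S_{\alpha_1}(\Psi_i))$ for each $i$. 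By monotonicity of $\rho$ under inclusion of bounded sets and its order-monotonicity (using that $\Psi\subset\Phi$ forces $\rho(\Psi)\le\rho(\Phi)$), I obtain
$$\widetilde{\rho_n}(\alpha_2)=\rho(S_{\alpha_2}(\Psi_1)+\cdots+S_{\alpha_2}(\Psi_n))\le\rho\bigl(S_{\alpha}(S_{\alpha_1}(\Psi_1))+\cdots+S_{\alpha}(S_{\alpha_1}(\Psi_n))\bigr).$$

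Next I would apply the first inequality of \eqref{widerstand}, with the role of $\Psi_i$ played by $S_{\alpha_1}(\Psi_i)$ and with parameter $\alpha$, to pull the outer symmetrization out of the sum:
$$\rho\bigl(S_{\alpha}(S_{\alpha_1}(\Psi_1))+\cdots+S_{\alpha}(S_{\alpha_1}(\Psi_n))\bigr)\le\rho\bigl(S_{\alpha}(S_{\alpha_1}(\Psi_1)+\cdots+S_{\alpha_1}(\Psi_n))\bigr).$$
Then \eqref{eden}, the special case $\rho(S_{\alpha}(\Phi))\le\rho(\Phi)$ applied to $\Phi=S_{\alpha_1}(\Psi_1)+\cdots+S_{\alpha_1}(\Psi_n)$, collapses the right-hand side to $\rho(S_{\alpha_1}(\Psi_1)+\cdots+S_{\alpha_1}(\Psi_n))=\widetilde{\rho_n}(\alpha_1)$, giving $\widetilde{\rho_n}(\alpha_2)\le\widetilde{\rho_n}(\alpha_1)$ and hence monotone decrease on $[0,\tfrac{1}{2}]$.

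For the interval $[\tfrac{1}{2},1]$ the argument is symmetric: for $\tfrac{1}{2}\le\alpha_1<\alpha_2\le1$ put $\alpha=\frac{\alpha_1+\alpha_2-1}{2\alpha_2-1}\in(0,1)$ and invoke inclusion \eqref{inclusion*2}, namely $S_{\alpha_1}(\Psi_i)\subset S_{\alpha}(S_{\alpha_2}(\Psi_i))$, then run the identical chain of \eqref{widerstand} followed by \eqref{eden} to conclude $\widetilde{\rho_n}(\alpha_1)\le\widetilde{\rho_n}(\alpha_2)$. The only point needing care, and the step I expect to be the main obstacle, is the first inequality: I must confirm that \eqref{widerstand} is available for the full list $\rho\in\{r,\hat{r},r_{ess},\hat{r}_{ess},\|\cdot\|,\gamma,w\}$. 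For $\rho\in\{r,\hat{r},r_{ess},\hat{r}_{ess}\}$ it is \cite[Proposition 4.2]{BP22b}, and for $\rho\in\{\|\cdot\|,\gamma,w\}$ it is exactly Corollary~\ref{geom_sym_nwg}; likewise \eqref{eden} must be invoked across the same list, which is legitimate since it is the special case of \eqref{widerstand} with a single summand. Once these references are marshalled, no genuine computation remains beyond the two elementary verifications that $\alpha\in(0,1)$ for the stated choices.
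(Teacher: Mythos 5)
Your proposal is correct and follows essentially the same route as the paper: the inclusion \eqref{inclusion*} (resp. \eqref{inclusion*2}) combined with monotonicity of $\rho$ under set inclusion, followed by \eqref{widerstand} applied to the sets $S_{\alpha_1}(\Psi_i)$ (resp. $S_{\alpha_2}(\Psi_i)$). The only cosmetic difference is that you split the second step into the first inequality of \eqref{widerstand} plus \eqref{eden}, whereas the paper invokes the full chain of \eqref{widerstand} in one stroke; your bookkeeping of which references cover which $\rho$ is also accurate.
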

\begin{proof}
Let $0\le\alpha_1<\alpha_2\le \frac{1}{2}$ and $\alpha=\frac{\alpha_1+\alpha_2-1}{2\alpha_1-1}$. Then $\alpha\in (0, 1)$ and $S_{\alpha_2} (\Psi)\subset S_{\alpha}(S_{\alpha_1} (\Psi))$ by (\ref{inclusion*}). 
Applying (\ref{widerstand}) we obtain 
$$\widetilde{\rho_n} (\alpha_2)=\rho(S_{\alpha_2}(\Psi_1)+\cdots +S_{\alpha_2}(\Psi_n))\le \rho(S_{\alpha}(S_{\alpha_1}(\Psi_1))+\cdots +S_{\alpha}(S_{\alpha_1}(\Psi_n)))$$
$$\le\rho(S_{\alpha_1}(\Psi_1)+\cdots +S_{\alpha_1}(\Psi_n))=\widetilde{\rho_n}(\alpha_1),$$ which proves that $\widetilde{\rho_n}$ is decreasing on $[0, \frac {1}{2}]$. For $\frac{1}{2}\le\alpha_1<\alpha_2\le 1$ let $\alpha=\frac{\alpha_1+\alpha_2-1}{2\alpha_2-1}$. It follows that $\alpha\in (0,1)$ and by (\ref{inclusion*2}) we have $S_{\alpha_1} (\Psi)\subset S_{\alpha}(S_{\alpha_2} (\Psi))$ for every bounded set of positive kernel operators on $L=L^2(X, \mu)$. By    (\ref{widerstand}) it follows 
$$\widetilde{\rho_n} (\alpha_1)=\rho(S_{\alpha_1}(\Psi_1)+\cdots +S_{\alpha_1}(\Psi_n))\le \rho(S_{\alpha}(S_{\alpha_2}(\Psi_1))+\cdots +S_{\alpha}(S_{\alpha_2}(\Psi_n)))$$
$$\le\rho(S_{\alpha_2}(\Psi_1)+\cdots +S_{\alpha_2}(\Psi_n))=\widetilde{\rho_n} (\alpha_2),$$ which completes the proof.
\end{proof}
We will need a special case of the following result (Corollary \ref{combination_c} below).
\begin{lemma} Let $\Psi_{ij}$ and $\Sigma_{ij}$ for $i=1, \ldots , n$ and $j=1, \ldots , m$ be bounded sets of positive kernel operators on a Banach function space $L$ and let $\alpha\in [0, 1]$. If 
$\rho \in \{r, \hat{r}, \|\cdot\|\}$, then
$$\rho\left( \left((\Psi^{(\alpha)} _{11} \circ \Sigma^{(1-\alpha)} _{11} ) + \cdots + (\Psi^{(\alpha)} _{1m} \circ \Sigma^{(1-\alpha)} _{1m} )\right)  \cdots  \left((\Psi^{(\alpha)} _{n1} \circ \Sigma^{(1-\alpha)} _{n1} ) + \cdots + (\Psi^{(\alpha)} _{nm} \circ \Sigma^{(1-\alpha)} _{nm} )\right)\right)$$
$$\le\rho ( \left((\Psi_{11}+\cdots +\Psi_{1m})^{(\alpha)} \circ  (\Sigma_{11}+\cdots +\Sigma_{1m})^{(1-\alpha)}\right) \cdots $$
\be
\cdots \left((\Psi_{n1}+\cdots +\Psi_{nm})^{(\alpha)} \circ  (\Sigma_{n1}+\cdots +\Sigma_{nm})^{(1-\alpha)}\right) ).
\label{expo_gen1}
\ee

If, in  addition, $L$ and $L^*$ have order continuous norms, then (\ref{expo_gen1}) holds also for   $\rho \in  \{r_{ess}, \hat{r}_{ess}, \gamma \}$.

If, in  addition, $L=L^2(X, \mu)$, then (\ref{expo_gen1}) holds also for   $\rho =w$.

\end{lemma}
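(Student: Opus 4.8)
The plan is to reduce the operator inequality (\ref{expo_gen1}) to a pointwise estimate on kernels coming from the Mitrinovi\'c-type inequality (\ref{mitrn}), to lift this estimate to a domination in the operator order between the two product sets appearing in (\ref{expo_gen1}), and then to invoke the monotonicity of each functional $\rho$ under such a domination.

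First I fix $i \in \{1, \ldots, n\}$ and choose operators $A_{ij} \in \Psi_{ij}$ and $B_{ij} \in \Sigma_{ij}$ with kernels $a_{ij}$ and $b_{ij}$ respectively ($j = 1, \ldots, m$). A typical element of the $i$-th factor on the left of (\ref{expo_gen1}) is $\sum_{j=1}^m A_{ij}^{(\alpha)} \circ B_{ij}^{(1-\alpha)}$, whose kernel equals $\sum_{j=1}^m a_{ij}^{\alpha} b_{ij}^{1-\alpha}$ at almost every point $(x,y)$. Since $\alpha + (1-\alpha) = 1 \ge 1$, inequality (\ref{mitrn}) applied pointwise gives
$$\sum_{j=1}^m a_{ij}(x,y)^{\alpha} \, b_{ij}(x,y)^{1-\alpha} \le \Big( \sum_{j=1}^m a_{ij}(x,y) \Big)^{\alpha} \Big( \sum_{j=1}^m b_{ij}(x,y) \Big)^{1-\alpha},$$
and the right-hand side is exactly the kernel of $\big( \sum_{j} A_{ij} \big)^{(\alpha)} \circ \big( \sum_{j} B_{ij} \big)^{(1-\alpha)}$, a typical element of the $i$-th factor on the right of (\ref{expo_gen1}). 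Because all the weights here sum to $1$, each of these Hadamard weighted geometric means is a positive kernel operator defined on all of $L$, so the pointwise estimate is an inequality $0 \le \sum_{j} A_{ij}^{(\alpha)} \circ B_{ij}^{(1-\alpha)} \le \big( \sum_{j} A_{ij} \big)^{(\alpha)} \circ \big( \sum_{j} B_{ij} \big)^{(1-\alpha)}$ of positive operators on $L$.

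Multiplying these $n$ estimates and using that the product of positive operators is monotone, I obtain that every element $T$ of the product set $\Theta^L$ on the left of (\ref{expo_gen1}) is dominated, $0 \le T \le T'$, by the element $T'$ of the product set $\Theta^R$ on the right built from the same operators $A_{ij}, B_{ij}$. For $\rho = \|\cdot\|$ the claim is then immediate from $\|T\| \le \|T'\|$ and $\|\Theta^L\| = \sup_{T \in \Theta^L} \|T\|$. For $\hat{r}$ and $r$ I pass to the $k$-fold products: any product $T_1 \cdots T_k$ of elements of $\Theta^L$ is dominated by a product $T_1' \cdots T_k' \in (\Theta^R)^k$, so the monotonicity of the operator norm, respectively of the spectral radius, for positive operators ($0 \le S \le S'$ implies $\|S\| \le \|S'\|$ and $r(S) \le r(S')$) gives $\sup_{(\Theta^L)^k} \|\cdot\| \le \sup_{(\Theta^R)^k} \|\cdot\|$ and $\sup_{(\Theta^L)^k} r \le \sup_{(\Theta^R)^k} r$; letting $k \to \infty$ in (\ref{BW}) and taking the supremum over $k$ in (\ref{genrho}) yields $\hat{r}(\Theta^L) \le \hat{r}(\Theta^R)$ and $r(\Theta^L) \le r(\Theta^R)$.

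For $\rho \in \{r_{ess}, \hat{r}_{ess}, \gamma\}$ (when $L$ and $L^*$ have order continuous norms) the same scheme works with the monotonicity of the Hausdorff measure of non-compactness recalled in Section 2 in place of norm/spectral-radius monotonicity: from $0 \le S \le S'$ one gets $\gamma(S) \le \gamma(S')$, which settles $\gamma$ and $\hat{r}_{ess}$ via (\ref{jointess}), and, combined with (\ref{esslim=inf}) applied to $\gamma(S^j) \le \gamma(S'^j)$, gives $r_{ess}(S) \le r_{ess}(S')$ and hence the bound for $r_{ess}$ through (\ref{genrhoess}). When $L = L^2(X,\mu)$, the case $\rho = w$ follows likewise from the monotonicity of the numerical radius of positive operators together with $w(\Theta) = \sup_{T \in \Theta} w(T)$. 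I expect the only genuinely delicate point to be the first step: one must verify that all Hadamard weighted geometric means are really defined on the whole of $L$ and that the pointwise inequality (\ref{mitrn}) lifts to a domination of the full product sets $\Theta^L$ and $\Theta^R$ (with consistent matching of the choices $A_{ij}, B_{ij}$); once this is secured, the passage to each $\rho$ reduces to the monotonicity facts collected in Section 2.
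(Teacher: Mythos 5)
Your proposal is correct and follows essentially the same route as the paper: the authors likewise apply the Mitrinovi\'c-type inequality (here in the form (\ref{mitr2}), which for two exponent groups is exactly (\ref{mitrn})) to each factor to dominate every element of an $l$-fold product of the left-hand set by a matching element of the $l$-fold product of the right-hand set, and then conclude by monotonicity of $\|\cdot\|$, $r$, $\gamma$, $r_{ess}$ and $w$ together with the defining formulas for the joint and generalized (essential) radii. No substantive difference.
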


\begin{proof}
Let $\rho \in \{r, \hat{r}, \|\cdot\|\}$. 
Let $l\in\NN$ and
$$A\in \left( \left((\Psi^{(\alpha)} _{11} \circ \Sigma^{(1-\alpha)} _{11} ) + \cdots + (\Psi^{(\alpha)} _{1m} \circ \Sigma^{(1-\alpha)} _{1m} )\right)  \cdots  \left((\Psi^{(\alpha)} _{n1} \circ \Sigma^{(1-\alpha)} _{n1} ) + \cdots + (\Psi^{(\alpha)} _{nm} \circ \Sigma^{(1-\alpha)} _{nm} )\right)\right)^l$$
Then $A=A_1\cdots A_l$ and for each $k=1, \ldots , l$ we have
$$A_k=(A_{k11}^{(\alpha)}\circ B_{k11}^{(1-\alpha)}+\cdots + A_{k1m}^{(\alpha)}\circ B_{k1m}^{(1-\alpha)})\cdots 
(A_{kn1}^{(\alpha)}\circ B_{kn1}^{(1-\alpha)}+\cdots + A_{knm}^{(\alpha)}\circ B_{knm}^{(1-\alpha)})$$
where $A_{kij}\in \Psi_{ij}$, $B_{kij}\in \Sigma_{ij}$ for $k=1, \ldots , l$, $i=1, \ldots , n$ and $j=1, \ldots , m$.
Then by (\ref{mitr2}) we have 
$$A_{k}\le C_{k}=((A_{k11}+\cdots +A_{k1m})^{(\alpha)}\circ(B_{k11}+\cdots +B_{k1m})^{(1-\alpha)})\cdots$$
$$\cdots((A_{kn1}+\cdots +A_{knm})^{(\alpha)}\circ(B_{kn1}+\cdots +B_{knm})^{(1-\alpha)}) $$
for each $k=1, \ldots , l$ and 
$$C_k \in ( \left((\Psi_{11}+\cdots +\Psi_{1m})^{(\alpha)} \circ  (\Sigma_{11}+\cdots +\Sigma_{1m})^{(1-\alpha)}\right) \cdots $$
$$
\cdots \left((\Psi_{n1}+\cdots +\Psi_{nm})^{(\alpha)} \circ  (\Sigma_{n1}+\cdots +\Sigma_{nm})^{(1-\alpha)}\right) ).$$
Therefore for
$C=C_1\cdots C_l$ we have $A\le C$ and
$$C\in ( \left((\Psi_{11}+\cdots +\Psi_{1m})^{(\alpha)} \circ  (\Sigma_{11}+\cdots +\Sigma_{1m})^{(1-\alpha)}\right) \cdots $$
$$
\cdots \left((\Psi_{n1}+\cdots +\Psi_{nm})^{(\alpha)} \circ  (\Sigma_{n1}+\cdots +\Sigma_{nm})^{(1-\alpha)}\right) )^l.$$
By monotonicity of spectral radius and operator norm and definitions, the inequality (\ref{expo_gen1}) for 
$\rho \in \{r, \hat{r}\}$ follows. The case $\rho= \|\cdot\|$ is proved similarly by taking $l=1$ in the proof above. The remaining cases follow in a similar manner.
\end{proof}

\begin{corollary}
\label{combination_c}
Let $\Psi_{ij}$ for $i=1, \ldots , n$ and $j=1, \ldots , m$ be bounded sets of positive kernel operators on $L^2(X, \mu)$ and 
 $\rho \in \{r, \hat{r}, r_{ess}, \hat{r}_{ess}, \|\cdot\|, \gamma, w \}$. If $\alpha\in [0, 1]$ then	
$$\rho((S_{\alpha} (\Psi_{11}) + \cdots + S_{\alpha}(\Psi_{1m}))  \cdots (S_{\alpha} (\Psi_{n1}) + \cdots + S_{\alpha}(\Psi_{nm})))$$
\be
\le\rho \left(S_{\alpha}(\Psi_{11}+\cdots +\Psi_{1m})\cdots S_{\alpha}(\Psi_{n1}+\cdots +\Psi_{nm})\right)
\label{expo}
\ee
\end{corollary}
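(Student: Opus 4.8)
The plan is to deduce this corollary from the preceding Lemma by a direct specialization, choosing $\Sigma_{ij} = \Psi_{ij}^*$ for all $i=1,\ldots,n$ and $j=1,\ldots,m$. Since we work on $L=L^2(X,\mu)$, whose norm and whose dual norm are order continuous, all seven seminorms $\rho \in \{r, \hat{r}, r_{ess}, \hat{r}_{ess}, \|\cdot\|, \gamma, w\}$ fall under the three cases of the Lemma, so inequality (\ref{expo_gen1}) is available for each of them with this choice of the $\Sigma_{ij}$.

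First I would identify the left-hand side. With $\Sigma_{ij}=\Psi_{ij}^*$ we have, for each pair $(i,j)$,
$$\Psi_{ij}^{(\alpha)} \circ \Sigma_{ij}^{(1-\alpha)} = \Psi_{ij}^{(\alpha)} \circ (\Psi_{ij}^*)^{(1-\alpha)} = S_{\alpha}(\Psi_{ij}),$$
directly from the definition $S_{\alpha}(\Psi)=\Psi^{(\alpha)} \circ (\Psi^*)^{(1-\alpha)}$ of the set $S_{\alpha}(\Psi)$. Hence the left-hand side of (\ref{expo_gen1}) coincides verbatim with the left-hand side of (\ref{expo}). Next I would match the right-hand side, where the only point requiring verification is the set identity
$$(\Psi_{i1}+\cdots+\Psi_{im})^* = \Psi_{i1}^* + \cdots + \Psi_{im}^*$$
for each row $i$, which follows at once from $(A_1+\cdots+A_m)^* = A_1^*+\cdots+A_m^*$ together with the definitions of the sum and of the adjoint of bounded sets. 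Writing $\Phi_i = \Psi_{i1}+\cdots+\Psi_{im}$, the $i$-th factor on the right of (\ref{expo_gen1}) then becomes
$$(\Psi_{i1}+\cdots+\Psi_{im})^{(\alpha)} \circ (\Sigma_{i1}+\cdots+\Sigma_{im})^{(1-\alpha)} = \Phi_i^{(\alpha)} \circ (\Phi_i^*)^{(1-\alpha)} = S_{\alpha}(\Phi_i),$$
so the right-hand side of (\ref{expo_gen1}) is exactly the right-hand side of (\ref{expo}). Substituting these two identifications into (\ref{expo_gen1}) yields (\ref{expo}).

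There is essentially no hard step here: the entire content sits in the Lemma, and the corollary is a bookkeeping specialization. The only care needed is to keep track that $S_{\alpha}$ applied to a single set $\Psi$ lets its two factors range independently over $\Psi$, i.e.\ $S_{\alpha}(\Psi)=\{A^{(\alpha)} \circ (B^*)^{(1-\alpha)} : A,B \in \Psi\}$, which is precisely what $\Psi^{(\alpha)} \circ \Sigma^{(1-\alpha)}$ produces when $\Sigma=\Psi^*$, and to confirm that the adjoint distributes over the set sum. Both facts are immediate from the definitions, so the specialization goes through for every admissible $\rho$ simultaneously.
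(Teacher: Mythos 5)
Your specialization $\Sigma_{ij}=\Psi_{ij}^{*}$ is exactly how the paper obtains Corollary \ref{combination_c} from the preceding lemma (the paper states it without written proof as an immediate special case), and your two verifications --- that $\Psi_{ij}^{(\alpha)}\circ(\Psi_{ij}^{*})^{(1-\alpha)}=S_{\alpha}(\Psi_{ij})$ with both factors ranging independently, and that the adjoint distributes over the set sum so the right-hand factors become $S_{\alpha}(\Psi_{i1}+\cdots+\Psi_{im})$ --- are precisely the bookkeeping needed. The proof is correct and takes the same route as the paper.
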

%
Now we are in position to generalize Theorem \ref{th2} to the setting of bounded sets of positive kernel operators on $L^2(X, \mu)$.
\begin{theorem}
\label{th2_joint}
Let $\Psi_{ij}$ for $i=1, \ldots , n$ and $j=1, \ldots , m$ be bounded sets of positive kernel operators on $L^2(X, \mu)$ and $\rho \in \{r, \hat{r}, r_{ess}, \hat{r}_{ess}, \|\cdot\|, \gamma, w \}$. Define $\overline{\rho} _n: [0,1] \to [0, \infty)$ by
$$\overline{\rho}_n (\alpha)=\left(\rho((S_{\alpha} (\Psi_{11}) + \cdots + S_{\alpha}(\Psi_{1m}))  \cdots (S_{\alpha} (\Psi_{n1}) + \cdots + S_{\alpha}(\Psi_{nm})))\right)^{\frac{1}{2}}\times $$
$$ \left(\rho( (S_{\alpha} (\Psi_{n1}) + \cdots + S_{\alpha}(\Psi_{nm}))\cdots (S_{\alpha} (\Psi_{11}) + \cdots + S_{\alpha}(\Psi_{1m}))) \right)^{\frac{1}{2}}.$$
 Then
\be
\overline{\rho} _n (\alpha) \le \rho (\left (\Psi_{11} + \cdots + \Psi_{1m})\cdots (\Psi_{n1} + \cdots + \Psi_{nm}) \right)^{\frac{1}{2}} \times
\label{cute} 
\ee
$$\rho (\left (\Psi_{n1} + \cdots + \Psi_{nm})\cdots (\Psi_{11} + \cdots + \Psi_{1m}) \right)^{\frac{1}{2}} $$
for each $\alpha \in [0,1]$. Moreover, $\overline{\rho}_n$ is decreasing on $[0,\frac{1}{2}]$ and increasing on $[\frac{1}{2}, 1]$.
\end{theorem}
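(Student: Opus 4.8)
The plan is to follow the architecture of the proof of Theorem \ref{thrun}, inserting Corollary \ref{combination_c} to absorb the sums sitting inside each factor. Throughout I abbreviate $\Phi_i := \Psi_{i1} + \cdots + \Psi_{im}$ for $i = 1, \ldots, n$, so that the right-hand side of (\ref{cute}) is exactly $\sqrt{\rho(\Phi_1 \cdots \Phi_n)\,\rho(\Phi_n \cdots \Phi_1)}$. The two claims — the bound (\ref{cute}) and the monotonicity of $\overline{\rho}_n$ — are then instances of the same two-step reduction: first turn a product of sums of symmetrizations into a product of symmetrizations of sums via Corollary \ref{combination_c}, then peel off the symmetrizations via (\ref{bauernprotest}), and finally multiply the forward and reverse estimates so the exponents cancel.

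First I would establish (\ref{cute}). Applying Corollary \ref{combination_c} to the forward factor and then (\ref{bauernprotest}) to the bounded sets $\Phi_1, \ldots, \Phi_n$ gives
$$\rho\big((S_{\alpha}(\Psi_{11}) + \cdots + S_{\alpha}(\Psi_{1m})) \cdots (S_{\alpha}(\Psi_{n1}) + \cdots + S_{\alpha}(\Psi_{nm}))\big) \le \rho(S_\alpha(\Phi_1) \cdots S_\alpha(\Phi_n)) \le \rho(\Phi_1 \cdots \Phi_n)^{\alpha}\,\rho(\Phi_n \cdots \Phi_1)^{1-\alpha}.$$
The same two steps applied to the reverse factor (Corollary \ref{combination_c} used on the reversed list of rows) yield the bound $\rho(\Phi_n \cdots \Phi_1)^{\alpha}\,\rho(\Phi_1 \cdots \Phi_n)^{1-\alpha}$. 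Multiplying the two estimates, the exponents of $\rho(\Phi_1 \cdots \Phi_n)$ and of $\rho(\Phi_n \cdots \Phi_1)$ each sum to $1$, so the product collapses to $\rho(\Phi_1 \cdots \Phi_n)\,\rho(\Phi_n \cdots \Phi_1)$; taking the square root gives exactly (\ref{cute}).

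For the monotonicity I would mimic the two symmetric cases of Theorem \ref{thrun}. For $0 \le \alpha_1 < \alpha_2 \le \frac{1}{2}$ set $\alpha = \frac{\alpha_1 + \alpha_2 - 1}{2\alpha_1 - 1} \in (0,1)$. The inclusion (\ref{inclusion*}), applied to each $\Psi_{ij}$ separately, gives $S_{\alpha_2}(\Psi_{ij}) \subset S_\alpha(S_{\alpha_1}(\Psi_{ij}))$; summing over $j$ and multiplying over $i$, and using that each $\rho$ is monotone under set inclusion (being a supremum over the set, respectively over $\Sigma^k$), replaces every occurrence of $S_{\alpha_2}(\Psi_{ij})$ by $S_\alpha(S_{\alpha_1}(\Psi_{ij}))$ without decreasing $\rho$. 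Now Corollary \ref{combination_c}, applied with the bounded sets $S_{\alpha_1}(\Psi_{ij})$ in place of $\Psi_{ij}$ and with parameter $\alpha$, converts the forward product into $\rho(S_\alpha(\Omega_1) \cdots S_\alpha(\Omega_n))$, where $\Omega_i := S_{\alpha_1}(\Psi_{i1}) + \cdots + S_{\alpha_1}(\Psi_{im})$, and (\ref{bauernprotest}) on $\Omega_1, \ldots, \Omega_n$ bounds this by $\rho(\Omega_1 \cdots \Omega_n)^{\alpha}\,\rho(\Omega_n \cdots \Omega_1)^{1-\alpha}$. The mirror chain handles the reverse product, producing $\rho(\Omega_n \cdots \Omega_1)^{\alpha}\,\rho(\Omega_1 \cdots \Omega_n)^{1-\alpha}$. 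Since $\overline{\rho}_n(\alpha_1)^2 = \rho(\Omega_1 \cdots \Omega_n)\,\rho(\Omega_n \cdots \Omega_1)$, multiplying the two estimates and cancelling the exponents exactly as above yields $\overline{\rho}_n(\alpha_2)^2 \le \overline{\rho}_n(\alpha_1)^2$, hence $\overline{\rho}_n(\alpha_2) \le \overline{\rho}_n(\alpha_1)$. The case $\frac{1}{2} \le \alpha_1 < \alpha_2 \le 1$ is identical with $\alpha = \frac{\alpha_1 + \alpha_2 - 1}{2\alpha_2 - 1}$ and the inclusion (\ref{inclusion*2}).

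The main obstacle I anticipate is bookkeeping rather than conceptual. I must confirm that the set-theoretic inclusion (\ref{inclusion*}) survives being summed over $j$ and multiplied over $i$, and that each $\rho \in \{r, \hat{r}, r_{ess}, \hat{r}_{ess}, \|\cdot\|, \gamma, w\}$ is genuinely monotone under such inclusions (for $r$ and $r_{ess}$ via the suprema defining (\ref{genrho}) and (\ref{genrhoess}); for the remaining functionals directly from their defining suprema). The one spot requiring real care is the legitimacy of applying $S_\alpha$ to the composite sets $S_{\alpha_1}(\Psi_{ij})$ inside Corollary \ref{combination_c}: this needs $S_{\alpha_1}(\Psi_{ij})$ to again be a bounded set of positive kernel operators whose adjoint set is $S_{\alpha_1}(\Psi_{ij}^*)$, which is precisely the identity $S_\alpha(\Psi)^* = S_\alpha(\Psi^*)$ recorded at the start of Section 3 and already exploited in the proof of Theorem \ref{thrun}.
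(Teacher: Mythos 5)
Your proposal is correct and follows essentially the same route as the paper: inequality (\ref{cute}) is obtained by a double application of (\ref{expo}) followed by (\ref{bauernprotest}) with the exponents cancelling upon multiplication, and the monotonicity follows from the inclusions (\ref{inclusion*}) and (\ref{inclusion*2}) together with the already-established bound applied to the sets $S_{\alpha_1}(\Psi_{ij})$ (resp.\ $S_{\alpha_2}(\Psi_{ij})$). The only difference is presentational: the paper invokes (\ref{cute}) as a black box in the monotonicity step, while you unwind it into its two constituent ingredients, which amounts to the same computation.
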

\begin{proof}
By double application of (\ref{expo}) and then of (\ref{bauernprotest}) we obtain
$$\overline{\rho}_n (\alpha)\le\rho(S_{\alpha}(\Psi_{11}+\cdots +\Psi_{1m})\cdots S_{\alpha}(\Psi_{n1}+\cdots +\Psi_{nm}))^{1/2}\times$$
$$\rho(S_{\alpha}(\Psi_{n1}+\cdots +\Psi_{nm})\cdots S_{\alpha}(\Psi_{11}+\cdots +\Psi_{1m}))^{1/2}$$
$$\le\rho((\Psi_{11}+\cdots +\Psi_{1m})\cdots(\Psi_{n1}+\cdots +\Psi_{nm}))^{\frac{\alpha}{2}}\times$$
$$\rho((\Psi_{n1}+\cdots +\Psi_{nm})\cdots(\Psi_{11}+\cdots +\Psi_{1m}))^{\frac{1-\alpha}{2}}\times$$
$$\rho((\Psi_{n1}+\cdots +\Psi_{nm})\cdots(\Psi_{11}+\cdots +\Psi_{1m}))^{\frac{\alpha}{2}}\times$$
$$\rho((\Psi_{11}+\cdots +\Psi_{1m})\cdots(\Psi_{n1}+\cdots +\Psi_{nm}))^{\frac{1-\alpha}{2}}$$
$$=\rho((\Psi_{11}+\cdots +\Psi_{1m})\cdots(\Psi_{n1}+\cdots +\Psi_{nm}))^{\frac{1}{2}}\times$$
$$\rho((\Psi_{n1}+\cdots +\Psi_{nm})\cdots(\Psi_{11}+\cdots +\Psi_{1m}))^{\frac{1}{2}},$$
which proves (\ref{cute}).
Let $0\le\alpha_1<\alpha_2\le \frac{1}{2}$. For $\alpha=\frac{\alpha_1+\alpha_2-1}{2\alpha_1-1}$ we have $\alpha\in (0, 1)$ and $S_{\alpha_{2}} (K)\subset S_{\alpha}(S_{\alpha_1} (K))$. Then by (\ref{cute})
$$\overline{\rho}_n (\alpha _2)\le$$
$$\left(\rho(\left(S_{\alpha}( S_{\alpha_1}(\Psi_{11})) + \cdots + S_{\alpha}(S_{\alpha_1}(\Psi_{1m}))\right)  \cdots (S_{\alpha}(S_{\alpha_1} (\Psi_{n1}) )+ \cdots + S_{\alpha}((S_{\alpha_1}(\Psi_{nm})))\right)^{\frac{1}{2}}\times $$
$$ \left(\rho(\left(S_{\alpha}( S_{\alpha_1}(\Psi_{n1})) + \cdots + S_{\alpha}(S_{\alpha_1}(\Psi_{nm}))\right)  \cdots (S_{\alpha}(S_{\alpha_1} (\Psi_{11}) )+ \cdots + S_{\alpha}((S_{\alpha_1}(\Psi_{1m})))\right)^{\frac{1}{2}}$$ 
$$\le \left(\rho((S_{\alpha_1} (\Psi_{11}) + \cdots + S_{\alpha_1}(\Psi_{1m}))  \cdots (S_{\alpha_1} (\Psi_{n1}) + \cdots + S_{\alpha_1}(\Psi_{nm})))\right)^{\frac{1}{4}}\times $$
$$ \left(\rho( (S_{\alpha_1} (\Psi_{n1}) + \cdots + S_{\alpha_1}(\Psi_{nm}))\cdots (S_{\alpha_1} (\Psi_{11}) + \cdots + S_{\alpha_1}(\Psi_{1m}))) \right)^{\frac{1}{4}} \times $$
$$ \left(\rho( (S_{\alpha_1} (\Psi_{n1}) + \cdots + S_{\alpha_1}(\Psi_{nm}))\cdots (S_{\alpha_1} (\Psi_{11}) + \cdots + S_{\alpha_1}(\Psi_{1m}))) \right)^{\frac{1}{4}} \times $$
$$ \left(\rho((S_{\alpha_1} (\Psi_{11}) + \cdots + S_{\alpha_1}(\Psi_{1m}))  \cdots (S_{\alpha_1} (\Psi_{n1}) + \cdots + S_{\alpha_1}(\Psi_{nm})))\right)^{\frac{1}{4}}=\overline{\rho}_n (\alpha _1),$$
which proves that  $\overline{\rho}_n$ is decreasing on $[0, \frac{1}{2}]$.

To prove that $\overline{\rho}_n$ is increasing on $[\frac{1}{2}, 1]$ let $\frac{1}{2}\le\alpha_1<\alpha_2\le 1$. For $\alpha=\frac{\alpha_1+\alpha_2-1}{2\alpha_2-1}$ we have $\alpha\in (0, 1)$ and $S_{\alpha_{1}} (K)\subset S_{\alpha}(S_{\alpha_2} (K))$. Similarly as above it follows from  (\ref{cute}) that $\overline{\rho}_n (\alpha _1) \le \overline{\rho}_n (\alpha _2)  $, which completes the proof.
\end{proof}
%

\noindent {\bf Acknowledgements.} The first author acknowledges a partial support of COST Short Term Scientific Mission program (action CA18232) and the Slovenian Research and Innovation Agency (grants P1-0222 and P1-0288). The first author thanks the colleagues and staff at the Faculty of Mechanical Engineering, University of Ljubljana and at the Institute of Mathematics, Physics and Mechanics for their hospitality during the research stay in Slovenia. 

 The second author acknowledges a partial support of  the Slovenian Research and Innovation Agency (grants P1-0222 and J2-2512). 
 
This preprint replaces the arxiv preprint of the article  ``Monotonicity properties of weighted geometric symmetrizations'' that has been published (open access) in Journal of Mathematical Inequalities \cite{BP24a}. 


\begin{thebibliography}{9999}

%
%
%
%
%

%
%
%
%
%
%
%
%
%
%
%
%
%
%
%
%
%
%
%
%
%
%
%
%
%
%
%
%
%
%
%
%
%
%
%
%
%
%
%
%
%
%
%
%
%
%
%
%
%
%
%
%
%
%
%
%
%
%
%
%
%
%
%
%
%
%
%
%
%
%
%
%
%
%
%
%
%
%
%
%
%
%
%
%
%
%
%
%
%
%
%
%
\bibitem{AA02} Y.A. Abramovich and C.D. Aliprantis,
 An invitation to operator theory, American Mathematical Society, Providence, 2002.

\bibitem{AB85} C.D. Aliprantis and O. Burkinshaw,  Positive operators,
Reprint of the 1985 original, Springer, Dordrecht, 2006.

\bibitem{Au10} K.M.R. Audenaert, Spectral radius of Hadamard product versus conventional product for non-negative matrices,
Linear Algebra Appl. 432 (2010), 366--368.




\bibitem{BS88} C. Bennett and R. Sharpley,  Interpolation of Operators,
         Academic Press, Inc., Orlando, 1988.


\bibitem{B23+} K. Bogdanovi\'{c},  Inequalities on the generalized and the joint spectral radius of bounded sets of positive operators on Banach function and sequence spaces, (2023), arXiv:2311.02444v1
        
        
\bibitem{BP21} K. Bogdanovi\'{c} and A. Peperko,  Hadamard weighted geometric mean inequalities for the spectral and essential spectral radius of positive operators on Banach function and sequence spaces, Positivity 26, Article number: 25 (2022). 

\bibitem{BP22b} K. Bogdanovi\'{c} and A. Peperko, Inequalities and equalities on the joint and
generalized spectral and essential spectral radius
of the Hadamard geometric mean of bounded sets
of positive kernel operators,  Linear Multilinear Algebra (2022), https://doi.org/10.1080/03081087.2022.2121369

\bibitem{BP24a} K. Bogdanovi\'{c} and A. Peperko, Monotonicity properties of weighted geometric symmetrizations,  Journal of Mathematical Inequalities (2024) Vol. 18, Num. 4, 1535--1546, doi:10.7153/jmi-2024-18-87 

%


\bibitem{CR07} G.P. Curbera and W.J. Ricker, Compactness properties of Sobolev imbeddings for rearrangement invariant norms,
 Transactions AMS 359  (2007),  1471--1484.


\bibitem{Dai11} X. Dai, Extremal and Barabanov semi-norms of a semigroup generated by
a bounded family of matrices, J. Math. Anal. Appl. 379 (2011), 827--833.


\bibitem{D92} R. Drnov\v sek, Spectral inequalities for compact
        integral operators on Banach function spaces,
         Math. Proc. Camb. Phil. Soc. 112 (1992), 589--598.

%

\bibitem{Drn} R. Drnov\v sek,  Sequences of bounds for the spectral radius of a positive operator, Linear Algebra Appl. 574 (2019), 40--45.

\bibitem{DP05} R. Drnov\v sek and A. Peperko,  Inequalities for the Hadamard
        weighted geometric mean of positive kernel operators on Banach function spaces,
         Positivity 10 (2006), 613--626.






\bibitem{E95} L. Elsner, The generalized spectral radius theorem: An analytic-geometric proof,
       Linear Algebra Appl. 220 (1995), 151--159.


\bibitem{EHP90} L. Elsner, D. Hershkowitz and A. Pinkus,  Functional inequalities
        for spectral radii of nonnegative matrices, Linear Algebra Appl. 129 (1990), 103--130.

\bibitem{EJS88} L. Elsner, C.R. Johnson and J.A. Dias Da Silva,
  	The Perron root of a weighted geometric mean of nonnegative matrices,
	Linear Multilinear Algebra 24 (1988), 1--13.






\bibitem {Gui82} P.S. Guinand, On quasinilpotent semigroup of operators,  Proc. Amer. Math. Soc. 86 (1982), 485--486.

%

\bibitem{J82} K. J\"{o}rgens,  Linear integral operators, Surveys and Reference Works in Mathematics 7, Pitman Press, 1982.

\bibitem{KM99} A.V. Kazhikhov and  A.E. Mamontov, Transport equations and Orlicz spaces, 535--544. In: Jeltsch R., Fey M. (eds) Hyperbolic Problems: Theory, Numerics, Applications. International Series of Numerical Mathematics, vol 130. Birkh\"{a}user, Basel, 1999.


%
%
%
%
%

\bibitem{KO85} S. Karlin, F. Ost, Some monotonicity properties of Schur powers of matrices and related inequalities,
{\rm Linear Algebra Appl.} 68 (1985), 47--65.
%
%
%

\bibitem{K61} J.F.C. Kingman, A convexity property of positive matrices,
        {\rm Quart. J. Math. Oxford Ser. (2)} 12 (1961), 283--284.


\bibitem{LP24} B. Lins, A. Peperko, Inequalities on the essential joint and essential generalized spectral radius, Journal of Mathematical Inequalities (2024), arXiv:2402.04265 [math.FA]

\bibitem{Me91} P. Meyer-Nieberg,  Banach lattices,
    Springer-Verlag, Berlin, 1991.


\bibitem{Mo} I.D. Morris, The generalized Berger-Wang formula and the spectral radius of linear cocycles, J. Funct. Anal. 262 (2012),  811--824.


\bibitem{Mi} D.S. Mitrinovi\'{c}, Analitic Inequalities, Springer Verlag, Berlin Heidelberg New York, 1970.

\bibitem{MP12} V. M\"uller and A. Peperko, Generalized spectral radius and its max algebra version,
 Linear Algebra Appl. 439  (2013), 1006--1016.

\bibitem{Nussbaum70} 
R.D. Nussbaum,  The radius of the essential spectrum, Duke Math. J. 37 (1970), 473--478.



\bibitem{P06} A. Peperko, Inequalities for the spectral radius of non-negative functions,
 Positivity 13 (2009), 255--272.

%
\bibitem{P12} A. Peperko, Bounds on the generalized and the joint spectral radius of Hadamard products of bounded sets of positive operators on sequence
spaces, Linear Algebra  Appl. 437 (2012), 189--201.

\bibitem{P17} A. Peperko,  Bounds on the joint and generalized spectral radius of the Hadamard geometric mean of bounded sets of positive kernel operators,
Linear Algebra  Appl. 533 (2017), 418--427.
%

\bibitem{P19} A. Peperko, Inequalities on the joint and generalized spectral and essential spectral radius of the Hadamard geometric mean of bounded sets of positive kernel operators,  Linear Multilinear Algebra 67 (2019), no.11, 2159--2172.

\bibitem{P21} A. Peperko, Inequalities for the spectral radius and essential spectral radius of positive operators on Banach sequence spaces, Positivity 25 (4) (2021), 1659--1675.

\bibitem{S11} A.R. Schep, Bounds on the spectral radius of Hadamard products of positive operators on $l_p$-spaces,  Electronic J. Linear Algebra 22 
(2011), 443--447.

\bibitem{Sc11} A.R. Schep, Corrigendum for "Bounds on the spectral radius of Hadamard products of positive operators on $l_p$-spaces", (2011), {\it preprint
at Research gate}: Corrigendum-Hadamard    

\bibitem{Sh07} S.-Q. Shen,T.-Z. Huang, Several inequalities for the largest singular value and spectral radius of matrices, Mathematical Inequalities  and Applications 10 (2007), 713--722.

\bibitem{SWP97} M.-H. Shih, J.-W. Wu and C.-T. Pang, Asymptotic stability and generalized Gelfand spectral radius formula,  Linear Alg. Appl. 252 (1997), 61--70.

\bibitem {ShT00} V.S. Shulman and Yu.V. Turovskii, Joint spectral radius,
operator semigroups and a problem of W.Wojty\'{n}ski,  J. Funct. Anal.
177 (2000), 383--441.
%
\bibitem {ShT08} V. S. Shulman and Yu. V. Turovskii, Application of topological radicals to calculation of joint spectral radii, (2008), arxiv:0805.0209v1, preprint. 
%

\bibitem{Schwenk86} A. J. Schwenk, Tight bounds on the spectral radius of asymmetric nonnegative matrices,
Linear Algebra Appl. 75 (1986), 257--265.

\bibitem{Wi02} F. Wirth, The generalized spectral radius and extremal norms,  Linear Algebra
         Appl. 342 (2002), 17--40.

\bibitem{Za83} A.C. Zaanen,
 Riesz Spaces II, North Holland, Amsterdam, 1983.




\end{thebibliography}
\end{document}